\documentclass[12pt,a4paper,final]{amsart}
 

 
\usepackage[notcite,notref,color]{showkeys} 

\usepackage{amsmath,amssymb,amsthm,bbm} 
\usepackage{enumitem} 
\usepackage[overload]{textcase}
 
\usepackage{epsfig,psfrag,color} 

\usepackage[colorlinks,bookmarks,linkcolor=black,citecolor=black]{hyperref} 
 
\usepackage[english,algoruled,lined,noresetcount,norelsize]{algorithm2e}

 
\setlist{itemsep=1pt,parsep=0pt,topsep=2pt,partopsep=0pt}  
\setenumerate{leftmargin=*} 

 
\def\itm#1{\rm ({#1})} 
\def\itmit#1{\itm{\it #1\,}} 
\def\rom{\itmit{\roman{*}}} 
\def\abc{\itmit{\alph{*}}} 
 
\def\endofFact{\hfill\scalebox{.6}{$\Box$}}


\SetTitleSty{textbf}{}
\SetArgSty{textrm}


\SetKw{Failure}{failure}
\SetKw{Return}{return}
\SetKw{NOT}{not}
\SetKwFor{RepeatForever}{repeat}{forever}{end}

\setcounter{topnumber}{2}        
\setcounter{bottomnumber}{2}  

 
\let\subset\subseteq  
\let\eps\varepsilon 
\let\rho\varrho 
\def\dcup{\dot\cup}  



\def\cC{\mathcal{C}}
\def\cD{\mathcal{D}}
\def\cE{\mathcal{E}}
\def\cF{\mathcal{F}}
\def\cG{\mathcal{G}}
\def\cH{\mathcal{H}}
\def\cP{\mathcal{P}}

 
\newtheorem{theorem}{Theorem}
\newtheorem{lemma}[theorem] {Lemma}

\newtheorem{question}[theorem] {Question}  
\theoremstyle{remark}  
\newtheorem{remark}[theorem] {Remark}  
\newtheorem{claim}[theorem] {Claim}  

\newcommand{\oldqed}{}
\newenvironment{claimproof}[1][Proof]{
  \renewcommand{\oldqed}{\qedsymbol}
  \renewcommand{\qedsymbol}{\endofFact}
  \begin{proof}[#1]
}{
  \end{proof}
  \renewcommand{\qedsymbol}{\oldqed}
} 

 
\newcommand{\By}[2]{\overset{\mbox{\tiny{#1}}}{#2}} 
\newcommand{\ByRef}[2]{   \By{\eqref{#1}}{#2} }

\newcommand{\eqByRef}[1]{ \ByRef{#1}{=} }

\newcommand{\leByRef}[1]{ \ByRef{#1}{\le} } 
\newcommand{\geByRef}[1]{ \ByRef{#1}{\ge} }

\newcommand{\NATS}{\mathbb{N}} 

\newcommand{\Prob}{\mathbb{P}}
\newcommand{\Exp}{\mathbb{E}}

\newcommand{\mult}{\text{mult}}

\newcommand{\Gr}[1][r]{\cG^{(#1)}}
\newcommand{\Crl}[1][\ell]{\cC^{(r)}_{#1}}
\newcommand{\Drl}[1][\ell]{\cD^{(r)}_{#1}}
\newcommand{\mo}{m^{(1)}}
\newcommand{\Bad}{B}

\newcommand{\EMAIL}[1]{  \textit{E-mail}: \texttt{#1} }

\newcommand{\tpl}[1]{\mathbf{#1}}

 
\title{Tight Hamilton cycles in random hypergraphs}

  \author[P. Allen]{Peter Allen*}
  \author[J. B\"ottcher]{Julia B\"ottcher*}

  \thanks{
    *
    Department of Mathematics, London School of Economics, Houghton Street,
London WC2A 2AE, U.K.
   \EMAIL{p.d.allen|j.boettcher@lse.ac.uk}
 }
  \author[Y. Kohayakawa]{Yoshiharu Kohayakawa\dag}
\thanks{
    \dag
    Instituto de Matem\'atica e Estat\'{\i}stica, Universidade de
    S\~ao Paulo, Rua do Mat\~ao 1010, 05508--090~S\~ao Paulo, Brazil.
   \EMAIL{yoshi@ime.usp.br}
 }
  \author[Y. Person]{Yury Person\ddag}
  \thanks{
    \ddag  
Institut f\"ur Mathematik, Freie Universit\"at Berlin, Arnimallee 3-5, D-14195 Berlin, Germany.
   \EMAIL{person@math.fu-berlin.de}
  }

  \thanks{
    PA was partially supported by FAPESP (Proc.~2010/09555-7).
    JB was partially supported by FAPESP (Proc.~2009/17831-7).
    YK was partially supported by CNPq (308509/2007-2, 477203/2012-4),
CAPES/DAAD (415/ppp-probral/po/D08/11629, 333/09) and NUMEC (Project
MaCLinC/USP).
    YP was partially supported by GIF grant no.~I-889-182.6/2005.
    The cooperation of the authors was
    supported by a joint CAPES-DAAD project (415/ppp-probral/po/D08/11629,
    Proj.~no.~333/09).
    The authors are grateful to NUMEC/USP, N\'ucleo de Modelagem Estoc\'astica e
    Complexidade of the University of S\~ao Paulo, for supporting this
    research.
  }

\date{\today}


\begin{document}
\begin{abstract}
  We give an algorithmic proof for the existence of tight Hamilton cycles
  in a random $r$-uniform hypergraph with edge probability $p=n^{-1+\eps}$
  for every $\eps>0$. This partly answers a question of Dudek and Frieze
  [Random Structures Algorithms], who used a second moment method to show
  that tight Hamilton cycles exist even for $p=\omega(n)/n$ ($r\ge 3$) where $\omega(n)\to\infty$ arbitrary slowly,   
  and for $p=(e+o(1))/n$ ($r\ge 4$).

  The method we develop for proving our result applies to related problems
  as well.
\end{abstract}
\maketitle

\section{Introduction}\label{sec:intro}

The question of when the random graph~$G(n,p)$ becomes hamiltonian is well
understood. P\'osa~\cite{Posa} and Korshunov~\cite{Kor76, Kor77} 
proved that the hamiltonicity threshold is
$\log n/n$, 
Koml\'os and Szemer\'edi~\cite{KomSzem} determined an exact
formula for the probability of the existence of a Hamilton cycle, and
Bollob\'as~\cite{Boll} established an even more powerful hitting time result.
The first polynomial time randomised algorithms for finding Hamilton cycles
in $G(n,p)$ were developed by Angluin and Valiant~\cite{AngVal} and
Shamir~\cite{Shamir}. Finally, Bollob\'as, Fenner and Frieze~\cite{BFF}
gave a deterministic polynomial time algorithm whose success probability
matches the probabilities established by Koml\'os and Szemer\'edi.

For random hypergraphs much less is known.  The random $r$-uniform
hypergraph $\cG^{(r)}(n,p)$ on vertex set~$[n]$ is generated by including
each hyperedge from $\binom{[n]}{r}$ independently with probability
$p=p(n)$.  First, Frieze~\cite{Frieze} considered loose Hamilton cycles in
random $3$-uniform hy\-per\-graphs. The \emph{loose $r$-uniform cycle} on
vertex set $[n]$ has edges $\{i+1,\dots,i+r\}$ for exactly all $i=k(r-1)$
with $k\in\NATS$ and $(r-1)\mid n$, where we calculate modulo~$n$. Frieze showed that the
threshold for a loose Hamilton cycle in $\cG^{(3)}(n,p)$ is $\Theta(\log
n/n^2)$. Dudek and Frieze~\cite{DudFriLoose} extended this to $r$-uniform
hypergraphs with $r\ge 4$, where the threshold is $\tilde{\Theta}(\log
n/n^{r-1})$. Both results require that $n$ is divisible by $2(r-1)$ (which was 
recently removed by Dudek, Frieze, Loh and Speiss~\cite{DFLS12}) and
rely on the deep Johansson-Kahn-Vu theorem~\cite{JKV}, which makes their
proofs non-constructive.

Tight Hamilton cycles, on the other hand, were first considered in
connection with packings.  The \emph{tight $r$-uniform cycle} on vertex set
$[n]$ has edges $\{i+1,\dots,i+r\}$ for all $i$ 
calculated modulo~$n$.  Frieze, Krivelevich and Loh~\cite{FriKriLoh} proved
that if $p\gg (\log^{21}n/n)^{1/16}$ and~$4$ divides~$n$ then most edges of
$G^{(3)}(n,p)$ can be covered by edge disjoint tight Hamilton
cycles. Further packing results were obtained by Frieze and Krivelevich~\cite{FriKri11} and by
Bal and Frieze~\cite{BalFri12}, but the probability range is far from best possible.  
Subsequently, Dudek and Frieze~\cite{DudFriTight} used a second
moment argument to show that the threshold for a tight Hamilton cycle in
$\cG^{(r)}(n,p)$ is sharp and equals $e/n$ for each $r\ge 4$ and for $r=3$ 
they showed that  
$\cG^{(3)}(n,p)$ contains a tight Hamilton cycle when $p=\omega(n)/n$ for any 
 $\omega(n)$ that goes to infinity. Since their method is
non-constructive they asked for an algorithm to find a tight Hamilton cycle
in a random hypergraph. In this paper we present a randomised algorithm for
this problem if $p$ is slightly bigger than in their result.

\begin{theorem}\label{thm:main} 
  For each integer $r\ge 3$ and $0<\eps<1/(4r)$ there is a randomised
  polynomial time algorithm which for any $n^{-1+\eps}<p\le 1$ a.a.s. finds
  a tight Hamilton cycle in the random $r$-uniform hypergraph
  $\cG^{(r)}(n,p)$.
\end{theorem}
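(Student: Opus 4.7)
The plan is to adapt the absorption method of Rödl--Ruciński--Szemerédi to random hypergraphs, combined with a reservoir device for the final closure. Partition $[n]$ into three parts: a small reservoir $R$ of size about $n^{1-\delta}$ for some $\delta=\delta(\eps)>0$, a short absorbing skeleton set $A$ with $|A|=\bigO(|R|)$, and the bulk $B=[n]\setminus(R\cup A)$. Since $p\ge n^{-1+\eps}$, the expected codegree of any fixed $(r-1)$-tuple is $pn\ge n^{\eps}$, so typical $(r-1)$-tuples have many extensions, giving slack for greedy/random choices throughout the algorithm.

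The first step is to build an absorbing tight path. For each vertex $v\in[n]$, call a short tight path $Q$ on a constant number $\ell=\ell(r)$ of vertices a \emph{$v$-absorber} if $v$ can be inserted into the interior of $Q$ so that the resulting longer path is again a tight path sharing the same first and last $(r-1)$-tuple ends as $Q$. A first-moment and Chernoff calculation shows that a.a.s.\ in $\cG^{(r)}(n,p)$ every $v$ has $\Omega(p^{c} n^{k})$ vertex-disjoint $v$-absorbers, for constants $c,k$ depending only on $r$; for $p\ge n^{-1+\eps}$ this is polynomially larger than $|R|$. Using a greedy connecting procedure, which repeatedly finds a short tight path joining two specified $(r-1)$-tuples (possible a.a.s.\ by a second-moment argument for constant-length tight paths in the given edge-density range), I would concatenate one absorber for each $v\in R$ into a single tight path $P_A$ inside $A$.

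Next, starting from one $(r-1)$-tuple end of $P_A$, I would run a randomised greedy extension into $B$: at each step the current tail is an $(r-1)$-tuple $T$, and I pick uniformly at random an unused $x\in B$ with $T\cup\{x\}\in E(\cG^{(r)}(n,p))$. Using deferred exposure of the random edges together with martingale-style concentration, the number of admissible extensions stays of order $p|B|\gg n^{\eps}$ throughout, so the walk leaves only $o(|B|)$ vertices of $B$ uncovered. Those residual vertices are shunted into $R$. Finally, close the construction by connecting the terminal tail of the long tight path to the initial tail of $P_A$ via a short random tight path in $R$, and then insert each remaining reservoir vertex into $P_A$ using its dedicated absorber gadget, producing a tight Hamilton cycle. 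All steps are randomised polynomial-time.

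The main obstacle is the analysis of the greedy extension on $B$: one must show a.a.s.\ that the procedure reaches all but $o(|B|)$ of $B$ despite the intricate dependencies introduced by previously exposed edges along the walk. Concretely, I must argue that with high probability no small collection of $(r-1)$-tuples becomes simultaneously ``dead'' (i.e., has all its extensions used), which demands exposing edges only when they are first queried by the algorithm and carefully bounding the correlation between the walk's history and the unexposed portion of $\cG^{(r)}(n,p)$. Once this bottleneck is controlled, the connecting lemma, reservoir, and absorber are routine random extensions that succeed with probability $1-o(1)$, completing the Hamilton cycle.
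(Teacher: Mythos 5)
Your high-level plan is in the same family as the paper's (a reservoir plus a greedy extension, closed off at the end), but you have mis-identified where the hard work lies, and two of your steps have concrete flaws.

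First, the connecting procedure is \emph{not} routine, and it is the paper's main technical lemma (Lemma~\ref{lem:connect}), not the greedy walk. You want to find $|R|$ vertex-disjoint short tight paths between prescribed $(r-1)$-tuple ends, each time avoiding $r$-sets whose exposure status has already been revealed by earlier connections. A "second-moment argument for constant-length tight paths" gives you one connection, but says nothing about producing many of them sequentially: after a few phases the $r$-sets near a given $(r-1)$-tuple may all already be exposed, and then that tuple is dead. The paper handles this by building width-$n^{(r-1)/2 - \eps/2}$ "fans" from each endpoint, tracking the exposed $r$-sets in an auxiliary hypergraph $H$, forbidding "dangerous" sets whose degree into $H$ is too large, and using Janson's inequality to argue one fan-to-fan bridge a.a.s.\ appears; this machinery is the bulk of the proof. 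Your plan to "expose edges only when first queried" is the right instinct but you have named it only for the greedy walk, which is actually the easy part (a union bound over $n$ extension steps).

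Second, your absorber bookkeeping does not close. You build one dedicated $v$-absorber in $P_A$ for each $v$ in a \emph{pre-chosen} set $R$ of size $n^{1-\delta}$, then run the greedy walk over $B$, and at the end say the $o(|B|)$ unreached vertices of $B$ are "shunted into $R$" and absorbed "using its dedicated absorber gadget." But those residual vertices of $B$ are not in the original $R$, so they have no dedicated gadgets, and nothing in your construction absorbs them. To make an RRS-style absorption work you would need a family of $\bigO(|R|)$ absorbers chosen so that \emph{every} $v\in[n]$ has many absorbers among them (a random selection plus Chernoff plus Hall-type matching), not one per prescribed $v$. The paper sidesteps this entirely by a different device: the reservoir vertices $w^*$ from copies of $\cH^*$ already sit on the long path, Lemma~\ref{lem:reserve} guarantees they can be deleted without breaking the path, and the leftover vertices are then threaded through the freed reservoir vertices by another application of the connection lemma.

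Third, nothing in your sketch controls the $1$-density of the absorber gadget. At $p = n^{-1+\eps}$ you need $m^{(1)}$ of any spanning gadget to be at most $1+\eps$; a naive constant-size $v$-absorber has $1$-density bounded away from $1$, so Theorem~\ref{thm:disjcopy} would not apply and the claimed "$\Omega(p^c n^k)$ disjoint copies" would not exist. The paper's reservoir graph $\cH^*$ is deliberately padded with long tight-path stretches between the absorbing core pieces precisely to bring $m^{(1)}(\cH^*)$ below $1+\eps$ (Lemma~\ref{lem:reserve}\ref{lem:reserve:1}). Your construction must do the same, and that is not a detail that Chernoff papers over.

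Finally, note the paper also needs a randomised preprocessing step that splits $\cG^{(r)}(n,q)$ into five independent hypergraphs, because the algorithm sees the entire input at once but the analysis makes five rounds of exposure; without such a step, conditioning between your absorber-construction, connection, greedy-walk and closure phases is not controlled.
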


The probability referred to in Theorem~\ref{thm:main} is with respect to the random 
bits used by the algorithm as well as by $\cG^{(r)}(n,p)$. 
The running time of the algorithm in the above theorem is
polynomial in~$n$, where the degree of the polynomial depends on~$\eps$.

\smallskip

\paragraph{\bf Organisation.}
We first provide some notation and a brief sketch of our proof, formulate
the main lemmas and prove Theorem~\ref{thm:main} in
Section~\ref{sec:proof}. In Sections~\ref{sec:connect} and~\ref{sec:reserve}
we prove the main lemmas, and in Section~\ref{sec:conclude} we end with
some remarks and open problems.


\section{Lemmas and proof of Theorem~\ref{thm:main}}
\label{sec:proof}

\subsection{Notation}
An $s$-\emph{tuple} $(u_1,\dots,u_s)$ of vertices is an ordered set of
vertices. We often denote tuples by bold symbols, and occasionally also
omit the brackets and write $\tpl{u}=u_1,\dots,u_s$. Additionally, we may
also use a tuple as a set and write for example, if~$S$ is a set,
$S\cup\tpl{u}:=S\cup\{u_i\colon i\in[s]\}$.  The \emph{reverse} of the
$s$-tuple $\tpl{u}$ is the $s$-tuple $(u_s,\dots,u_1)$. 


In an $r$-uniform hypergraph~$\cG$ the tuple $P=(u_1,\dots,u_\ell)$ forms a
\emph{tight path} if $\{u_{i+1},\dots,u_{i+r}\}$ is an edge for every $0\le
i\le \ell-r$.  For any $s\in[\ell]$ we say that~$P$ \emph{starts} with the
$s$-tuple $(u_1,\dots,u_s)=:\tpl{v}$ and \emph{ends} with the $s$-tuple
$(u_{\ell-(s-1)},\dots,u_\ell)=:\tpl{w}$. We also call~$\tpl{v}$ the
\emph{start $s$-tuple} of $P$, $\tpl{w}$ the \emph{end $s$-tuple} of
$P$, and~$P$ a $\tpl{v}-\tpl{w}$ path. The \emph{interior} of~$P$ is formed
by all its vertices but its start and end $(r-1)$-tuples. Note that the interior
of $P$ is not empty if and only if $\ell>2(r-1)$.

For a hypergraph~$\cH$ we define the \emph{$1$-density} of~$\cH$ to be
$d^{(1)}(\cH):=e(\cH)/\big( v(\cH) -1\big)$ if $v(\cH)>1$, and $d^{(1)}(\cH):=0$
if $v(\cH)=1$. We set
\begin{equation*}
  \mo(\cH):=\max\{d^{(1)}(\cH') \colon \cH'\subset \cH\} \,.
\end{equation*}
We denote the $r$-uniform tight cycle on~$\ell$ vertices by~$\Crl$.
Observe that $\mo(\Crl)=\ell/(\ell-1)$.

\subsection{Outline of the proof}

A simple greedy strategy shows that for $p=n^{\eps-1}$ it is easy to find a
tight path (and similarly a tight cycle) in~$\cG^{(r)}(n,p)$ which covers
all but at most $n^{1-\frac12\eps}$ of its vertices. Incorporating these few
remaining vertices is where the difficulty lies.

To overcome this difficulty we apply the following strategy, which we call the
\emph{reservoir method}.  We first construct a tight path $P$ of a linear length in $n$ which
contains a vertex set $W^*$, called the \emph{reservoir}, such that for any
$W\subseteq W^*$ there is a tight path on $V(P)\setminus W$ whose end 
$(r-1)$-tuples are 
the same as that of $P$.  In a second step we use the mentioned greedy
strategy to extend~$P$ to an almost spanning tight path $P'$, with a leftover set
$L$.  The advantage we have gained now is that we are permitted to reuse
the vertices in $W^*$: we will show that, by using a subset~$W$ of vertices
from~$W^*$ to incorporate the vertices from~$L$, we can extend the almost
spanning tight path to a spanning tight cycle~$C$. More precisely, we shall
delete~$W$ from~$P'$ (observe that, by construction of $P$, the hypergraph
 induced on $V(P)\setminus W$ contains a tight path with the same ends)
  and use precisely all vertices of~$W$ to connect the
vertices of~$L$ to construct $C$.

We remark that our method has similarities, in spirit, with the absorbing
method for proving extremal results for large structures in dense
hypergraphs (see, e.g., R\"odl, Ruci\'nski and
Szemer\'edi~\cite{RRS}). 
The techniques to deal with multi-round exposure in our algorithm is
similar to those used by Frieze in~\cite{Fri88}.
Moreover, a method very similar to ours was used
independently by K\"uhn and Osthus~\cite{KOPosa} to find bounds on the
threshold for the appearance of the square of a Hamilton cycle in a random
graph.

\subsection{Lemmas}

We shall rely on the following lemmas. We state these lemmas together with
an outline of how they are used, and then give the details of the proof of
Theorem~\ref{thm:main}.

Our first lemma asserts that there are hypergraphs~$\cH^*$ with density
arbitrarily close to~$1$ which have a spanning tight path and a
vertex~$w^*$ such that deleting~$w^*$ from~$\cH^*$ leaves a spanning tight
path with the same start and end $(r-1)$-tuples.

\begin{lemma}[Reservoir lemma]\label{lem:reserve} 
  For all $r\ge 2$ and $0<\eps<1/(6r)$, there exist an $r$-uniform hypergraph
  $\cH^*=\cH^*(r,\eps)$ on less than $16/\eps^2$ vertices, 
  a vertex~$w^*$, and two disjoint
  $(r-1)$-tuples $\tpl{u}=(u_1,\ldots,u_{r-1})$ and
  $\tpl{v}=(v_1,\ldots,v_{r-1})$  such that 
  \begin{enumerate}[label=\rom]
  \item\label{lem:reserve:1} $m^{(1)}(\cH^*)\le 1+\eps$,
  \item\label{lem:reserve:2} $\cH^*$ has a tight Hamilton $\tpl{u}-\tpl{v}$
    path, and
  \item\label{lem:reserve:3} $\cH^*-w^*$ has a tight Hamilton
    $\tpl{u}-\tpl{v}$ path.
 \end{enumerate}
\end{lemma}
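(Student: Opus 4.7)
The plan is to construct $\cH^*$ explicitly as a long tight path supplemented with just enough extra edges to bypass the reservoir vertex $w^*$. Choose a vertex set $V=\{x_1,\ldots,x_N\}$ with $N=\lfloor 16/\eps^2\rfloor$, set $\tpl u=(x_1,\ldots,x_{r-1})$, $\tpl v=(x_{N-r+2},\ldots,x_N)$, and $w^*=x_t$ in the interior. Let $P$ be the tight Hamilton $\tpl u$-$\tpl v$ path $x_1,\ldots,x_N$ on $V$, and choose a second tight Hamilton $\tpl u$-$\tpl v$ path $P'$ on $V\setminus\{w^*\}$. Defining $E(\cH^*):=E(P)\cup E(P')$ makes \itm{ii} and \itm{iii} automatic, so the task reduces to choosing $P'$ cleverly enough that \itm{i} also holds; the vertex bound is immediate from the choice of~$N$.

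The naive ``simple shift'' $P'=x_1,\ldots,x_{t-1},x_{t+1},\ldots,x_N$ adds only $r-1$ new edges to $E(P)$, but then the $2r-1$ vertices $\{x_{t-r+1},\ldots,x_{t+r-1}\}$ support $2r-1$ edges altogether and so form a sub-hypergraph of $1$-density $(2r-1)/(2r-2)>1+1/(6r)>1+\eps$, violating \itm{i}. To circumvent this, I would use a \emph{distributed} bypass: require $P$ and $P'$ to agree outside a window $B\subseteq V$ of size $\ell=\Theta(1/\eps)$ around $w^*$, and inside $B$ design a small gadget in which $P\cap B$ is a tight Hamilton path on $\ell$ vertices and $P'\cap B$ a tight Hamilton path on $\ell-1$ vertices, sharing the same $(r-1)$-tuple entries and exits into the rest of $P$, and whose total edge set spreads the non-shared edges evenly enough that every subhypergraph supported inside $B$ has $1$-density at most $1+\eps$.

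Granting such a gadget, condition \itm{i} follows by splitting any sub-hypergraph $\cH'\subseteq\cH^*$ into its parts inside and outside $B$: the outside part lives on the tight path $P$, so its $1$-density is at most $1$; the inside part is controlled by the gadget's design; and the $O(r)$ edges straddling the boundary of $B$ contribute only lower-order terms to the count. The entire difficulty of the proof therefore concentrates in the gadget construction: one must fit two tight Hamilton paths of near-equal length (same endpoints, almost all edges shared) on only $\Theta(1/\eps)$ vertices while keeping every subhypergraph of $1$-density at most $1+\eps$---in particular, beating the rigid $(2r-1)/(2r-2)$ lower bound that any localised shift would incur. Engineering a rearrangement in which the missing vertex $w^*$ is absorbed by many small coordinated changes rather than one sharp one is the main combinatorial obstacle.
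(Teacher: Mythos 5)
Your setup---taking $E(\cH^*)=E(P)\cup E(P')$ for two tight Hamilton $\tpl u$-$\tpl v$ paths, one on all of $V$ and one on $V\setminus\{w^*\}$, and noting that the only issue is keeping $m^{(1)}\le 1+\eps$---is exactly the right framing, and your observation that the naive one-vertex shift already creates a sub-hypergraph of $1$-density $(2r-1)/(2r-2)>1+\eps$ correctly pinpoints why the problem is non-trivial. But everything after that is a description of what a proof would need to do, not a proof. You write ``Granting such a gadget\dots'' and then close with the admission that engineering the gadget ``is the main combinatorial obstacle.'' That obstacle is, essentially, the whole of Lemma~\ref{lem:reserve}, and it is left entirely unresolved. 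There is no construction, no verification of the $1$-density bound for any subhypergraph, and no argument why two Hamilton paths of lengths $N$ and $N-1$ on $\Theta(1/\eps^2)$ vertices with shared endpoints can coexist at density $\le 1+\eps$.

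There is also a structural mispredication worth flagging. You envisage the two paths agreeing outside a small window $B$ of size $\Theta(1/\eps)$ around $w^*$ and differing only inside $B$. The paper's construction does the opposite: the bypass is deliberately \emph{delocalised}. It interleaves $2\ell-1$ ordered blocks $U,A_1,B_1,\dots$ (with $\ell=\Theta(1/\eps)$), connected by long padding paths $I(\cdot,\cdot)$ of $\Theta(1/\eps)$ fresh vertices each, and the $w^*$-avoiding path zigzags through \emph{all} of these blocks in a different order---so the symmetric difference $E(P)\triangle E(P')$ is spread over essentially all $\Theta(1/\eps^2)$ vertices, with padding chosen long enough that deleting any single vertex makes a subgraph $1$-degenerate. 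A genuine proof would need to supply this (or an equivalent) explicit combinatorial design and then check maximality of the $1$-density, which the paper reduces to identifying a canonical densest subhypergraph $\Drl$ and showing every other subgraph is no denser. As it stands, your write-up identifies the target but does not hit it.
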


We provide a proof of Lemma~\ref{lem:reserve} in Section~\ref{lem:reserve}.
We also call the graph~$\cH^*$ asserted by this lemma the \emph{reservoir
  graph} and the vertex~$w^*$ the \emph{reservoir vertex}, since they will
provide us as follows with the reservoir mentioned in the outline. If we
can find many disjoint copies of $\cH^*$ in $\cG^{(r)}(n,p)$, and if we can
connect these copies of~$\cH^*$ to form a tight path, then the set~$W^*$ of
reservoir vertices~$w^*$ from these $\cH^*$-copies forms such a reservoir.

In order to find many disjoint $\cH^*$-copies, we use the following
standard theorem.

\begin{theorem}[see, e.g.,  {\cite[Theorem 4.9]{JaLuRu:Book}}] \label{thm:disjcopy}
  For every $r$-uniform hy\-per\-graph~$\cH$ there are constants $\nu>0$ and
  $C\in\NATS$ such that if $p\ge C
  n^{-1/m^{(1)}(\cH)}$, then $\Gr(n,p)$ a.a.s.\ contains $\nu n$ vertex disjoint
  copies of $\cH$.
\end{theorem}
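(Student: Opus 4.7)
My plan is to apply the second moment method to $X$, the random variable counting copies of $\cH$ in $\Gr(n,p)$, and then extract a linear number of vertex-disjoint copies by a deletion argument on the intersection graph of copies. To simplify notation, write $\p_\cF := n^{v(\cF)} p^{e(\cF)}$ for the expected number of (labelled) copies of a sub-hypergraph $\cF\subseteq\cH$. The key deterministic fact, used throughout, is that for every $\cF\subseteq\cH$ with $v(\cF)\ge 2$ the definition of $m^{(1)}$ gives $e(\cF)/m^{(1)}(\cH)\le v(\cF)-1$, and so with $p\ge Cn^{-1/m^{(1)}(\cH)}$ one has $\p_\cF\ge C^{e(\cF)} n$; the case $v(\cF)=1$ is trivial.

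First I would compute $\Exp[X]=\Theta(\p_\cH)$, which by the above estimate is at least $\delta C^{e(\cH)} n$ for some constant $\delta=\delta(\cH)>0$. Next I would carry out the standard subgraph-intersection decomposition of the variance,
\begin{equation*}
 \Var[X] \;=\; \sum_{\emptyset\neq\cF\subseteq\cH} \Theta\bigl(n^{2v(\cH)-v(\cF)} p^{2e(\cH)-e(\cF)}\bigr)
 \;=\; \Exp[X]^2 \sum_{\emptyset\neq\cF\subseteq\cH} \Theta(1/\p_\cF),
\end{equation*}
which using $\p_\cF\ge n$ yields $\Var[X]/\Exp[X]^2 = O(1/n) = o(1)$. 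Chebyshev's inequality then gives $X\ge \tfrac12\Exp[X]$ a.a.s., i.e.\ $X\ge \tfrac{\delta}{2} C^{e(\cH)} n$ copies a.a.s.

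To pass from many copies to many \emph{disjoint} copies, I would consider the intersection graph $\Gamma$ whose vertices are the copies of $\cH$ in $\Gr(n,p)$ and whose edges are pairs of copies sharing at least one vertex. The same pair-counting computation gives $\Exp[|E(\Gamma)|] = O(\Exp[X]^2/n)$, where the factor $1/n$ comes from the worst-case intersection pattern $\cF$ being a single vertex ($\p_\cF=n$). Applying Chebyshev (or, if a cleaner estimate is preferred, a second-moment computation for $|E(\Gamma)|$ analogous to that for $X$) and combining with the lower bound on $X$, a.a.s.\ the Caro–Wei bound yields an independent set in $\Gamma$ of size at least $X^2/(X+2|E(\Gamma)|)$. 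In the regime where $\cH$ itself attains $m^{(1)}(\cH)$ this is $\Theta(n)$ because both $X$ and $|E(\Gamma)|$ are $\Theta(n)$; when $\Exp[X]=\omega(n)$ the ratio is still $\Omega(n)$ since $|E(\Gamma)|=O(X^2/n)$ dominates $X$ in the denominator. Choosing $C$ sufficiently large makes the implicit constant in this $\Omega(n)$ positive, giving the required $\nu=\nu(\cH,C)>0$ disjoint copies.

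I expect the main obstacle to be the deletion step: the variance computation for $X$ is routine, but to convert it into $\nu n$ \emph{disjoint} copies one needs genuine a.a.s.\ (not merely constant-probability) control on $|E(\Gamma)|$ and a careful handling of the two regimes ($\cH$ being the densest subgraph vs.\ not). A cleaner alternative, if the Caro–Wei route proves delicate, is to run a greedy packing: since a.a.s.\ almost every vertex lies in $O(\Exp[X]/n)$ copies (by Markov on $X_v$), a random greedy selection succeeds for $\Omega(n)$ steps. Either way, the role of the hypothesis $p\ge Cn^{-1/m^{(1)}(\cH)}$ reduces entirely to the monotonicity inequality $\p_\cF\ge n$ for every $\cF\subseteq\cH$.
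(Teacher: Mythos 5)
The paper does not prove this statement; it is cited verbatim from Janson--\L{}uczak--Ruci\'nski's book, so there is no ``paper's own proof'' to compare against. Your reconstruction via the second-moment method plus a deletion argument is, however, essentially the textbook route, and the estimates you write down are correct: the key inequality $\p_\cF = n^{v(\cF)}p^{e(\cF)}\ge C^{e(\cF)}n$ for every $\cF\subseteq\cH$ (which follows from $e(\cF)\le m^{(1)}(\cH)(v(\cF)-1)$) is exactly what drives both the variance bound $\Var X = O(\Exp[X]^2/n)$ and the bound $\Exp|E(\Gamma)|=O(\Exp[X]^2/n)$, and the Cauchy--Schwarz form $X^2/(X+2|E(\Gamma)|)$ of Caro--Wei does give $\Omega(n)$ in both the $\Exp X=\Theta(n)$ and $\Exp X=\omega(n)$ regimes.

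The one place where your write-up is genuinely incomplete is the step you yourself flag: passing from $\Exp|E(\Gamma)|$ to an a.a.s.\ bound on $|E(\Gamma)|$. Markov alone gives only constant-probability control, which is not enough for an a.a.s.\ statement. Your suggested second-moment computation for $|E(\Gamma)|$ does work: the diagonal contributes $\Exp|E(\Gamma)|=o(\Exp|E(\Gamma)|^2)$, pairs-of-pairs sharing one of the two copies contribute $O(\Exp[X]^3/n^2)=o(\Exp|E(\Gamma)|^2)$ since $\Exp X\to\infty$, and pairs-of-pairs sharing edges but not a whole copy contribute an extra $1/\p_\cF\le 1/(Cn)$ factor. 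So that route closes the gap, though it is not quite ``routine'' and you should actually carry it out. By contrast, the ``greedy via Markov on $X_v$'' alternative you mention at the end does \emph{not} work as stated: a per-vertex Markov bound controls a typical vertex but not all $n$ of them simultaneously without a growing threshold $T=\omega(1)$, which then only yields $o(n)$ greedy steps. A cleaner way to finish, standard in this literature, is to first obtain $\Omega(n)$ disjoint copies with probability $>1/2$ (second moment plus Markov on $|E(\Gamma)|$ suffices for this), and then use that the maximum number $M$ of vertex-disjoint copies is $1$-Lipschitz under re-randomising the edges at a single vertex, so Azuma (or Talagrand, since $M$ is certifiable by $e(\cH)M$ edges) boosts the constant-probability lower bound on $M$ to an a.a.s.\ one.
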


For connecting the $\cH^*$-copies into a long tight
path~$P$ we use the next lemma.

\begin{lemma}[Connection lemma]\label{lem:connect} 
  Given $r\ge 3$, $0<\eps<1/(4r)$ and $\delta>0$, there exists $\eta>0$ 
  such that there is a (deterministic) polynomial time
  algorithm~$\mathcal{A}$ which on inputs $\cG=\cG^{(r)}(n,p)$ with $p= n^{-1+\eps}$ a.a.s.\ does
  the following.

  Let $1\le k\le \eta n$, let $X$ be any subset of $[n]$ of size at least
  $\delta n$.  Let $\tpl{u}^{(1)},\ldots,\tpl{u}^{(k)}$,
  $\tpl{v}^{(1)},\ldots,\tpl{v}^{(k)}$ be any $2k$ pairwise disjoint
  $(r-1)$-tuples in~$[n]$. Then~$\mathcal{A}$ finds in~$\cG$ a collection
  of vertex disjoint tight paths $P_i$, $1\le i\le k$, of length at most
  $\ell:=(r-1)/\eps+2$, such that $P_i$ is a $\tpl{u}^{(i)}-\tpl{v}^{(i)}$ path all of
  whose interior vertices are in $X$.
\end{lemma}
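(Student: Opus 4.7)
The plan is to process the pairs $(\tpl{u}^{(i)}, \tpl{v}^{(i)})$ one at a time, producing each tight path $P_i$ by a BFS in an auxiliary digraph, while marking the vertices used as forbidden for later pairs. To keep successive BFS rounds independent, I would apply multi-round exposure: split $\cG$ into $L = \bigO(1/\eps)$ independent copies $\cG_1, \dots, \cG_L$ with edge probability $p_0$ satisfying $(1-p_0)^L = 1-p$, so that $p_0 = \Theta(n^{-1+\eps})$ and the union of the $\cG_j$ is distributed as $\cG^{(r)}(n,p)$.

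For a vertex set $Y\subseteq[n]$ and round $j$, define the auxiliary digraph $D_{Y,j}$ on the ordered $(r-1)$-tuples of distinct vertices of $[n]$, with an arc from $(x_1,\dots,x_{r-1})$ to $(x_2,\dots,x_{r-1},y)$ whenever $y\in Y$ and $\{x_1,\dots,x_{r-1},y\}$ is an edge of $\cG_j$. Tight paths in $\cG$ with interior in $Y$ correspond to directed walks in the $D_{Y,j}$ that use distinct rounds. The technical heart of the proof is a uniform expansion statement: a.a.s., for every $j\in[L]$, every $Y\subseteq[n]$ with $|Y|\geq\delta n/2$, and every family $\mathcal{A}$ of $(r-1)$-tuples with $|\mathcal{A}|$ not yet saturating the ambient set, the out-neighborhood $N^+_{D_{Y,j}}(\mathcal{A})$ has size at least $|\mathcal{A}|\cdot\Omega(n^\eps)$. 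I would prove this by Janson-type concentration of $|N^+_{D_{Y,j}}(\mathcal{A})|$ around its mean (of order $|\mathcal{A}|\,|Y|\,p_0=|\mathcal{A}|\,\Theta(n^\eps)$ when $\mathcal{A}$ is small), combined with a union bound over $(Y,\mathcal{A})$ whose logarithm is absorbed by the sharp concentration, thanks to the room provided by $\eps<1/(4r)$.

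Given this expansion, the $i$-th connection proceeds as follows. Let $F_i$ be the vertices used by previous paths; since $|F_i|\le(i-1)\ell\le\eta n\ell$, choosing $\eta$ small enough keeps $Y_i:=X\setminus(F_i\cup\tpl{u}^{(i)}\cup\tpl{v}^{(i)})$ of size at least $\delta n/2$. Initialise $\mathcal{A}_0:=\{\tpl{u}^{(i)}\}$ and, in each subsequent round $j$, replace $\mathcal{A}_{j-1}$ by a largest-possible subset of $N^+_{D_{Y_i,j}}(\mathcal{A}_{j-1})$; analogously run a backward BFS from $\tpl{v}^{(i)}$ in a disjoint block of rounds, producing a family $\mathcal{B}$ of tuples from which $\tpl{v}^{(i)}$ is backward-reachable. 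After $\bigO(1/\eps)$ rounds on each side the families $\mathcal{A}$ and $\mathcal{B}$ are large enough that they must share a common tuple $\tpl{m}$ (either by pigeonhole once they are large enough, or by a direct second-moment argument exploiting that the two families are built from disjoint rounds and therefore independent). Backtracking through the BFS trees yields a tight $\tpl{u}^{(i)}$--$\tpl{v}^{(i)}$ path of length at most $\ell$ with interior in $Y_i\subseteq X$; we record its vertices as used and continue.

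The main obstacle will be the uniform expansion statement, since it must hold simultaneously for every relevant $Y$ and $\mathcal{A}$---both of which depend on the adversarial input and on the prior randomness. The plan is to apply Janson's inequality (or a Kim--Vu tail bound) to the count of arcs from $\mathcal{A}$ to a single target tuple in $D_{Y,j}$, sum deviations across targets, and then pay the union-bound cost over $(Y,\mathcal{A})$ using the headroom provided by $\eps<1/(4r)$. Tuning the stopping criteria for the BFS so that the combined forward and backward paths fit within the length budget $\ell$ is a delicate but essentially bookkeeping issue.
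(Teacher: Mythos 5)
Your high-level plan mirrors the paper's at the conceptual level (grow a forward structure from $\tpl{u}^{(i)}$ and a backward one from $\tpl{v}^{(i)}$, each expanding by a factor $\Theta(n^\eps)$ per layer, then meet in the middle), but the mechanism you propose for controlling correlations across phases has a genuine gap that cannot be repaired without essentially redoing the paper's argument.

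The problem is your ``uniform expansion statement.'' You want that a.a.s., \emph{for every} $Y\subseteq[n]$ with $|Y|\ge\delta n/2$ and every relevant family $\mathcal A$, the out-neighbourhood of $\mathcal A$ into $Y$ in the auxiliary digraph has size $\ge|\mathcal A|\cdot\Omega(n^\eps)$. For the base case $|\mathcal A|=1$ (a single $(r-1)$-tuple $\tpl a$) this is simply false. The degree of $\tpl a$ into $[n]$ in one round $\cG_j$ is $\Bin(n-r+1,p_0)$ and so concentrates around $\Theta(n^\eps)$; an adversary choosing $Y$ can delete all $\Theta(n^\eps)$ of these neighbours, since $n-|Y|=\Omega(n)\gg n^\eps$. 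The corresponding union bound likewise fails: the concentration for a single tuple is $\exp(-\Theta(n^\eps))$, but the number of sets $Y$ is $\exp(\Theta(n))$, and no amount of ``headroom'' from $\eps<1/(4r)$ helps because $\eps<1$. The same obstruction reappears at every BFS layer before $|\mathcal A|$ reaches size $\Omega(n^{1-\eps}\log n)$. And $Y$ really is correlated with your randomness, because $F_i$ is determined by the BFS outputs of earlier phases using the same rounds $\cG_1,\dots,\cG_L$, so you cannot simply drop the quantifier over $Y$ and work with a fixed set.

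The paper avoids this entirely by \emph{not} splitting into a constant number of independent rounds and then union-bounding. Instead it exposes $r$-sets adaptively, one query at a time, maintaining an ``expos\'e hypergraph'' $H$ of all $r$-sets queried so far and ensuring the algorithm never re-queries one. Each new query is therefore genuinely fresh, with no union bound over adversarial $Y$ needed. The price is an intricate accounting: a multihypergraph $U_i$ of used $j$-tuples with multiplicity caps, and hierarchies $D_i^{(j)}$ and $\tilde D_i^{(j)}$ of ``dangerous'' $j$-sets that have accumulated too many exposed extensions. The fan structure (rather than a plain BFS) is also load-bearing: the fan property --- that any $\ge r/2$ consecutive interval determines the subpath back to the root --- is exactly what controls the overlap term $\Delta$ in the final Janson application when connecting the leaves of the two fans. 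Your proposal as written lacks both the adaptive-exposure mechanism and the multiplicity bookkeeping, and the uniform expansion statement it relies on instead is false, so the argument does not go through.
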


 We prove this lemma in Section~\ref{sec:connect}.
 In fact, we will also make use of this lemma after extending $P$ to a maximal tight
 path $P'$ in order to extend $P'$ (reusing vertices of the reservoir $W^*$) to
 cover the leftover vertices $L$. It is for this
 reason that we  require the lemma to work with a set $X$ which
 can be quite small.

\subsection{Proof of the main theorem}

Our goal is to describe an algorithm which a.a.s.\ constructs
a tight Hamilton cycle in the $r$-uniform random hypergraph $\Gr(n,q)$ in
five steps. For convenience we replace $\Gr(n,p)$ in Theorem~\ref{thm:main} 
by $\Gr(n,q)$. 
 We make use of a $5$-round exposure of the
random hypergraph, that is, each of the five algorithm steps will
individually a.a.s.\ succeed on an $r$-uniform random hypergraph with edge
probability somewhat smaller than~$q$.
Observe, however, that for the algorithm the input graph is given at once,
and not as the union of five graphs. Therefore, in a preprocessing step the
algorithm will first split the (random) input hypergraph into five
(independent random) hypergraphs. The only probabilistic component of our algorithm is 
in the preprocessing step. 

Our five algorithm steps will then be as follows.  Firstly, we apply
Theorem~\ref{thm:disjcopy} in order to find $cn$ vertex disjoint copies of
the reservoir graph $\cH^*$ from Lemma~\ref{lem:reserve}.  Secondly, we use
the connection lemma, Lemma~\ref{lem:connect}, to connect the $\cH^*$
copies to a tight path $P$ of length $c'n$ which contains a set $W^*$ of
linearly many reservoir vertices. Thirdly, we greedily extend $P$ until we
get a tight path $P'$ on $n-n^{1-(\eps'/2)}$ vertices. In the fourth and
fifth step we use~$W^*$ and Lemma~\ref{lem:connect} to connect the
remaining vertices to the path constructed so far and to close the path
into a cycle.

For technical reasons it will be convenient to assume that the edge
probability in each of the last four steps is exactly $q'=n^{-1+\eps'}$ for
some $\eps'$. We therefore split our random input hypergraph into five
independent random hypergraphs, of which the first has edge
probability~$q''\ge q'$ and the remaining four have edge probability $q'$.

\begin{proof}[Proof of Theorem~\ref{thm:main}]

  {\sl Constants:}
  Given $r\ge 3$ and $0<\eps<1/(4r)$, set $\eps':=\eps/2$.  Suppose in the
  following that~$n$ is sufficiently large and define
  $q'=n^{-1+\eps'}$. Now let $q>n^{-1+\eps}$ be given and observe that
  $q\ge 5 q' \ge 1-(1-q')^5$. Finally, let $q''\in(0,1]$ be such
  that 
  \begin{equation}\label{eq:main:p}
    1-q=(1-q'')(1-q')^4
  \end{equation}
  and note that since $q\ge 1-(1-q')^5$, we have $q''\ge q'$.

  Let $\eta_1>0$ be the constant given by Lemma~\ref{lem:connect}
  with input $r$, $\eps'$ and $\delta=1/2$. Let $\cH^*=\cH^*(r,\eps'/2)$ be the
  $r$-uniform reservoir hypergraph given by Lemma~\ref{lem:reserve} and
  $n^*:=v(\cH^*)$. Let $\nu>0$ be the
  constant given by Theorem~\ref{thm:disjcopy} with input $\cH^*$. We set
  \begin{equation}\label{eq:defc}
    c:=\min\Big(\frac{1}{2n^*},\frac{\nu}{n^*},\eta_1\Big)\,. %
  \end{equation}
  Finally, let $\eta_2>0$ and $\ell_2$ be the constants given by
  Lemma~\ref{lem:connect} with input $r$, $\eps'$ and $\delta=c/2$.

  \smallskip

  {\sl Preprocessing:}
  We shall use a randomised procedure to split the input
  graph~$\cG$ which is distributed according to $\Gr(n,q)$
  into five hypergraphs $\cG_1,\ldots,\cG_5$, such that $\cG_1$ is
  distributed according to $\Gr(n,q'')$ and
  $\cG_2,\dots,\cG_5$ are distributed according to $\Gr(n,q')$, where the
  choice of parameters is possible by~\eqref{eq:main:p}.
  Moreover these five random hypergraphs are mutually independent.
 
  Our randomised procedure takes
  a copy $\cG$ of $\Gr(n,q)$ and colours its edges as follows.
  It colours each edge~$e$ of~$\cG$ independently with a non-empty subset~$c$ of $[5]$
  such that
  \begin{equation*}
    \Pr(e\text{ receives colour }c)=\begin{cases}
      q'^{|c|}(1-q')^{4-|c|}(1-q'')/q & \text{ if } 1\notin c\\
      q'^{|c|-1}(1-q')^{5-|c|}q''/q& \text{ if } 1\in c \,.
    \end{cases}
  \end{equation*} 
  Then we let $\cG_i$ be the hypergraph with those edges whose colour
  contains~$i$ for each $i\in[5]$.

  For justifying that this randomised procedure has the desired effect, let
  us consider the following second random experiment. We take five
  independent random hypergraphs, $\cG_1=\Gr(n,q'')$ and four copies
  $\cG_2,\ldots,\cG_5$ of $\Gr(n,q')$, and form an $r$-uniform hypergraph
  on $n$ vertices, whose edges are the union of $\cG_1,\ldots,\cG_5$, each
  receiving a colour which is a subset of $[5]$ identifying the subset of
  $\cG_1,\ldots,\cG_5$ containing that edge.  Observe that we simply obtain
  $\Gr(n,q)$, when we ignore the colours in this union.

  It is straightforward to check that the two experiments yield identical
  probability measures on the space of $n$-vertex coloured
  hypergraphs. It follows that any algorithm which with some probability
  finds a tight Hamilton cycle when presented with the five hypergraphs
  $\cG_i$ of the first experiment succeeds with the same probability when
  presented with five hypergraphs obtained from the second experiment.

  \smallskip

  {\sl Step 1:}
  The first main step of our algorithm finds $cn$ vertex disjoint copies of the reservoir
  graph $\cH^*$  in $\cG_1$. To this end we would like to apply
  Theorem~\ref{thm:disjcopy}, hence we need to check its preconditions. We
  require that $q''\ge Cn^{-1/m^{(1)}(\cH^*)}$ for some large $C$. By
  Lemma~\ref{lem:reserve} we have $m^{(1)}(\cH^*)\le 1+\frac12\eps'$, and
  $1/(1+\frac12\eps')>1-\eps'$. It follows that for all sufficiently large $n$
we
  have $q'=n^{-1+\eps'}\ge Cn^{-1/m^{(1)}(\cH^*)}$, and so the same holds for
  $q''$ since $q''\ge q'$.

  By Theorem~\ref{thm:disjcopy} and~\eqref{eq:defc}, a.a.s.\ $\cG_1$
  contains at least $\nu n\ge n^*\cdot cn$ vertex disjoint copies of
  $\cH^*$. Hence we can algorithmically \emph{find} a subset of at least
  $cn$ of them as follows.  We search the vertex subsets of size~$n^*$
  of~$G_1$. Whenever we find a subset that induces $\cH^*$ and does not
  share vertices with a previously chosen $\cH^*$-copy, then we choose it.
  Clearly, we can do this until we chose $cn$ vertex disjoint copies
  $\cH_1,\ldots,\cH_{cn}$ of~$\cH^*$.  This requires running time
  $O\big(n^{n^*}\big)$, where $n^*\le 16\eps^{-2}$ does not depend
  on $n$.

  \smallskip

  {\sl Step 2:} The second step consists of using $\cG_2$ and
  Lemma~\ref{lem:connect} with input $r,\eps'$ and $\delta=1/2$ to connect
  the $cn$ vertex disjoint reservoir graphs into one tight path. Let $W^*$
  consist of the $cn$ reservoir vertices, one in each of
  $\cH_1,\ldots,\cH_{cn}$. By~\eqref{eq:defc}, $\cH_1,\ldots,\cH_{cn}$
  cover at most $n/2$ vertices. By Lemma~\ref{lem:connect} applied with
  $X=[n]\setminus\big(\bigcup_{i\in[cn]} V(\cH_i)\big)$ there is a
  polynomial time algorithm which a.a.s.\ for each $1\le i\le cn-1$ finds a
  tight path in $\cG_2$ connecting the end $(r-1)$-tuple of $\cH_i$ with
  the start $(r-1)$-tuple of $\cH_{i+1}$, where these tight paths are
  disjoint and have their interior in~$X$.  This yields a tight path $P$ in
  $\cG_1\cup\cG_2$ containing all of the $\cH_i$ with the following
  property.  For any $W\subset W^*$, if we remove $W$ from $P$, then we
  obtain (using the additional edges of the $\cH_i$) a tight path $P(W)$
  whose start and end $(r-1)$-tuples are the same as those of $P$ (see
  Lemma~\ref{lem:reserve}\ref{lem:reserve:1}).

  \smallskip

  {\sl Step 3:} In the third step we use $\cG_3$ to greedily extend~$P$
  to a tight path $P'$ covering all but at most $n^{1-\frac12\eps'}$ vertices.
  Let $P_0=P$ and do the following for each $i\ge 0$. Let $\tpl{e}_i$ be
  the end $(r-1)$-tuple of~$P_i$ if there is an edge $\tpl{e}_iv_i$ in
  $\cG_3$ for some $v_i\in[n]\setminus V(P_i)$ then append~$v_i$ to~$P_i$ to
  obtain the tight path $P_{i+1}$. If no such edge exists, then halt.

  Observe that in step~$i$ of this procedure, it suffices to reveal the
  edges $\tpl{e}_iw$ with $w\in[n]\setminus P_i$. Hence, by the method of
  deferred decision, the probability that $v_i$ does not exist is at most
  $(1-q')^{n-|P_i|}$. So, as long as $|P_i|\le n-n^{1-\frac12\eps'}$ this
  probability is at most $\exp(-q'n^{1-\frac12\eps'})\le \exp(-n^{\frac12\eps'})$.
  We take the union bound over all (at most $n$) $i$ to infer that this procedure
  a.a.s.\  indeed terminates with a tight path $P'$ 
  with $|P'|\ge n-n^{1-\frac12\eps'}$ which contains~$P$.

  \smallskip

  {\sl Step 4:}
  Now let $L'$ be the set of vertices not covered by $P'$. Let~$L$ be obtained from
  $L'$ by adding at most $r-2$ vertices of $W^*$, such that $|L|$ is divisible
  by $r-1$. Let $Y_1,\ldots,Y_t$ be a partition of $L$ into $|L|/(r-1)$ tuples
  of size $r-1$. Let $Y_0$ be the reverse of the
  start $(r-1)$-tuple of $P'$, and $Y_{t+1}$ be the reverse of its end
  $(r-1)$-tuple. 
  
  In the fourth step, we use $\cG_4$ and Lemma~\ref{lem:connect} with input
  $r$, $\eps'$ and $\delta=\frac12c$ to find for each $0\le i\le \frac12t$
  a tight path between $Y_{2i}$ and $Y_{2i+1}$ of length at most $\ell_2$
  using only vertices in $W^*\setminus L$, such that these paths are
  pairwise disjoint. This is possible since $|W^*\setminus L|\ge \frac12cn$
  and since $t\le|L|\le n^{1-\frac12\eps'}+r-2$ implies $\frac{t}2+1\le
  n^{1-\frac13\eps'} \le \eta_2 n$ for~$n$ sufficiently large. Let~$W^{**}$ be
  the set of at least $cn-(\frac{t}2+1)\ell_2\ge cn-n^{1-\frac13\eps'}\ell_2\ge
  \frac23cn$ vertices in~$W^*$ not used in this step.

  \smallskip

  {\sl Step 5:} Similarly, in the fifth step, we use $\cG_5$ and
  Lemma~\ref{lem:connect}, with input $r$, $\eps$ and $\delta=c/2$, to find
  for each $0\le i\le \frac12(t-1)$ a tight path between $Y_{2i+1}$ and
  $Y_{2i+2}$ of length at most $\ell_2$ using only vertices in
  $W^{**}\setminus L$, such that these paths are pairwise disjoint.  Again,
  $|W^{**}\setminus L|\ge \frac12cn$ and $\frac{t}2+1\le \eta_2 n$ for $n$
  sufficiently large.  Thus Lemma~\ref{lem:connect} guarantees that
  this step a.a.s.\ succeeds also and the tight paths can be found in polynomial
  time.
  
  \smallskip

  But now we are done: Let~$W$ be the vertices of~$W^*$ used in steps~4
  and~5. By definition of~$W^*$ we can delete the vertices of~$W$
  from~$P'$ and obtain a tight path $P'(W)$ through the remaining vertices
  of~$P'$ (using additional edges of the reservoir graphs) and with the
  same start and end $(r-1)$-tuples. Then $P'(W)$
  together with the connections constructed in steps~$4$ and~$5$ (which
  incorporated all vertices of~$L$) form a
  Hamilton cycle in~$\cG$.
\end{proof}

\begin{remark}\label{rem:complexity}
  We note that the only non-deterministic part of the algorithm presented
  in the above proof concerns the partition of the edges of the input graph
  into five random subsets at the beginning.

  The algorithm in the connection lemma (Lemma~\ref{lem:connect}) is
  polynomial time, where the power of the polynomial is independent
  of~$\eps$.  The same is (obviously) true for the greedy procedure of
  step~3.  Finding many vertex disjoint reservoir graphs in step~1 however,
  we can only do in time~$n^{16\eps^{-2}}$.
\end{remark}

\section{Proof of the connection lemma}
\label{sec:connect}

{\bf Preliminaries.}
For a binomially
distributed random variable~$X$ and a constant~$\gamma$ with $0<\gamma\le 3/2$
we will use the following Chernoff bound, which can be found, e.g., in~\cite
[Corollary~2.3]{JaLuRu:Book}:
\begin{equation}\label{eq:chernoff}
  \Prob\big(\, |X-\Exp X|\ge\gamma \Exp X\big)\le 2\exp(-\gamma^2\Exp X/3)
  \,.
\end{equation}
In addition we apply the following consequence of Janson's inequality (see
for example~\cite{JaLuRu:Book}, Theorem 2.18): Let~$\cE$ be a finite set
and~$\cP$ be a family of non-empty subsets of~$\cE$. Now consider the
random experiment where each $e\in\cE$ is chosen independently with
probability~$p$ and define for each~$P\in\cP$ the indicator variable $I_P$
that each element of~$P$ gets chosen. Set~$X=\sum_{P\in\cP} I_P$ and
$\Delta=\frac12\sum_{P\neq P',P\cap
  P'\neq\emptyset}\Exp(I_PI_{P'})$. Then
\begin{equation}\label{eq:Janson}
  \Prob(X=0)\le\exp(\Delta - \Exp X)
  \,.
\end{equation}

For $e\in\binom{n}{r}$ we say that we \emph{expose the $r$-set~$e$} in
$\cG^{(r)}(n,p)$, if we perform (only) the random experiment of including~$e$
in~$\cG^{(r)}$ with probability~$p$ (recall that $p:=n^{-1+\eps}$). If this
random experiment includes~$e$ then
we say that \emph{$e$ appears}.  Clearly, we can iteratively generate (a
subgraph of) $\cG^{(r)}(n,p)$ by exposing $r$-sets, as long as we do not expose
any $r$-set twice. For a tuple~$\tpl{u}$ of at most $r-1$ vertices in~$[n]$ we say
that we \emph{expose the $r$-sets at~$\tpl{u}$}, if we expose all $r$-sets
$e\in\binom{n}{r}$ with $\tpl{u}\subset e$. Similarly, we
expose~$\cH\subset\binom{n}{r}$ if we expose all $r$-sets $e\in\cH$.
 
In our algorithm we use the following structure.  A \emph{fan} $\cF(\tpl{u})$ in an
$r$-uniform hypergraph~$\cH$ is a set $\{P_1,\dots,P_t\}$ of tight paths
in~$\cH$ which all have length either~$\ell$ or~$\ell-1$, start in the same
$(r-1)$-tuple $\tpl{u}$, and satisfy the following condition. For any set
$S$ of at least $r/2$ vertices, let $\{P_j\}_{j\in I}$ be the collection of
tight paths in which the set $S$ appears as a consecutive interval. Then
the paths $\{P_j\}_{j\in I}$ also coincide between~$\tpl{u}$ and the
interval~$S$.  The tuple~$\tpl{u}$ is also called the \emph{root}
of~$\cF(\tpl{u})$. Moreover, $\ell$ is the \emph{length} of~$\cF(\tpl{u})$,
and~$t$ its \emph{width}. The set of \emph{leaves}
$L\big(\cF(\tpl{u})\big)$ of~$\cF(\tpl{u})$ is the set of $(r-1)$-tuples
$\tpl{u}'$ such that some path in~$\cF$ ends in~$\tpl{u'}$. For intuition,
observe that in the graph case $r=2$, a fan is simply a rooted tree all of
whose leaves are at distance either $\ell$ or $\ell-1$ from the root. For
$r\ge 3$, a fan is a more complicated structure.

\smallskip

{\bf Idea.}
We shall consecutively build the $\tpl{u}^{(i)}-\tpl{v}^{(i)}$ paths~$P_i$
in the set~$X$,
starting with~$P_1$. The construction of the path~$P_i$ we call
\emph{phase~$i$}, and the strategy in this phase is as follows. We shall
first expose all the hyperedges at~$\tpl{u}^{(i)}$, excluding a set of
`used' vertices~$U$ (like those not in $X$, or in any~$\tpl{u}^{(i')}$
or~$\tpl{v}^{(i')}$). The edges $\{\tpl{u}^{(i)},c\}$ appearing in this
process form possible starting edges for a path connecting $\tpl{u}^{(i)}$
and $\tpl{v}^{(i)}$. For each such (one edge) path~$P$ we next consider the
$(r-1)$-endtuple of~$P$ and expose all edges at this tuple, excluding edges
that were exposed earlier and used vertices (where now we count vertices in
$P$ as used). And so on. In this way we obtain a
(consecutively growing) fan~$\cF(\tpl{u}^{(i)})$ with root~$\tpl{u}^{(i)}$.
While growing this fan we shall also insist that no $j$-tuple of vertices
with $j<r$ is used too often.  We stop when the fan has width
$n^{1-\eps/2}$. We will show that with high probability the fan then has
only constant depth. Then we similarly construct a fan~$\cF(\tpl{v}^{(i)})$
of width $n^{1-\eps/2}$ with root~$\tpl{v}^{(i)}$ (again avoiding used
vertices and exposed edges).

 In a last step, for each leaf~$\tpl{\tilde u}^{(i)}$
of~$\cF(\tpl{u}^{(i)})$ and each leaf $\tpl{\tilde v}^{(i)}$ of
$\cF(\tpl{v}^{(i)})$ we expose all $\tpl{\tilde u}^{(i)}-\tpl{\tilde
  v}^{(i)}$ paths of length $2(r-1)$, avoiding exposed edges. We shall show
that with high probability at least one of these paths appears (and the
fans $\cF(\tpl{u}^{(i)})$ and $\cF(\tpl{v}^{(i)})$ can be constructed), and
hence we have successfully constructed~$P_i$.  We shall also show that, in
phase~$i$ we only exposed much less than a $1/n$ fraction of the $r$-sets
in $X$. Hence it is plausible that we can avoid these exposed $r$-sets in
future phases. We note that this last statement makes use of the fact $r\ge 3$:
our connection algorithm does not work for $2$-graphs.

\begin{proof}[Proof of Lemma~\ref{lem:connect}]

{\sl Setup:} 
  Given $r\ge 3$, $\delta>0$ and $0<\eps<1/(4r)$, we set
  \begin{equation}\label{eq:setxis}   
\xi':=\delta/(48r^2)\,,\quad\xi:=(\xi')^r/(r^2(r-1)!)\quad\text{and}
\quad\eta=\delta/(16r)\,.
  \end{equation}
  Without loss of generality we will assume $|X|=\delta n$: this simplifies our
calculations.

  In the algorithm described below, we maintain various auxiliary sets. We
  have a set $U$ of \emph{used vertices}, which contains all vertices in
  the sets $\tpl{u}^{(i)}$ and $\tpl{v}^{(i)}$, and in previously
  constructed connecting paths. In phase~$i$ we maintain additionally a
  (non-uniform) multihypergraph $U_i$ of \emph{used sets}, which keeps
  track of the number of times we have so far used a vertex, or pair of
  vertices, et cetera, consecutively in some path of the fan currently under
  construction.

  Actually, it will greatly simplify the analysis if any such used set
  can only appear in a unique order on these paths. Hence we choose the
  following setup. We arbitrarily fix an equipartition
  \[X=Y_1\dcup\cdots\dcup Y_{2r}\dcup Y'_1\dcup\cdots\dcup Y'_{2r}\,,\]
  and set $Y:=Y_1\dcup\cdots\dcup Y_{2r}$ and  $Y':=Y'_1\dcup\cdots\dcup Y'_{2r}$. 
  We shall construct the fan~$\cF(\tpl{u}^{(i)})$ with root $\tpl{u}^{(i)}$ in $Y_1,\ldots,Y_{2r}$,
  taking successive levels of the fan from successive sets (in cyclic
  order), and similarly~$\cF(\tpl{v}^{(i)})$ in $Y'_1,\ldots,Y'_{2r}$.

  Further, we maintain an $r$-uniform \emph{expos\'e hypergraph} $H$, which
  keeps track of the $r$-sets which we have exposed. We let $H_i$ be the
  hypergraph with the edges of~$H$ at the beginning of phase~$i$.
  
  We define hypergraphs $D_i^{(1)},\ldots,D_i^{(r-1)}$ of \emph{dangerous
  sets} for phase~$i$ as follows:
  \begin{subequations}
  \begin{align}
    \label{eq:connectr:dangerr-1}
    D_i^{(r-1)}&:=\Big\{\tpl{x}\in\tbinom{X}{r-1}\colon \deg_{H_i}(\tpl{x})\ge\xi
    n\Big\}\,,&&\text{and}\\
    \label{eq:connectr:dangerj}
    D_i^{(j)}&:=\Big\{\tpl{x}\in\tbinom{X}{j}\colon
    \deg_{D_i^{(j+1)}}(\tpl{x})\ge\xi n\Big\}\,,&&\text{$r-2\ge
      j\ge 1$}\,.
  \end{align}
  \end{subequations}
  We will not use any set in any $D_i^{(j)}$ consecutively in a path in the
  fans constructed in phase~$i$.
    
    Given
  two vertex-disjoint $(r-1)$-sets $\tpl{u}$ and $\tpl{v}$, we say that
  the path $(\tpl{u},\tpl{v})$ of length $2r-2$ is \emph{blocked} by the
  expos\'e hypergraph $H$ if any $r$ consecutive vertices of the
  $(2r-2)$-set $\{\tpl{u},\tpl{v}\}$ is in $H$. When constructing the fan
$\cF\big(\tpl{v}^{(i)}\big)$ with root
  $\tpl{v}^{(i)}$, we need to ensure that not too many of its leaves are
  blocked by~$H$ together with too many leaves of the previously
  constructed fan $\cF\big(\tpl{u}^{(i)}\big)$. For this purpose we define
  hypergraphs~$\tilde{D}_i^{(j)}$ of temporarily dangerous sets in
  phase $i$ as follows. We call an $(r-1)$-set $\tpl{y}$ in
  $Y'$ \emph{temporarily dangerous} if there are at least
  $\xi'\big|L\big(\cF(\tpl{u}^{(i)})\big)\big|$ leaves $\tpl{x}$ of
  $\cF(\tpl{u}^{(i)})$ such that $\{\tpl{x},\tpl{y}\}$ is blocked by
  $H_i$. We define
  \begin{subequations}
  \begin{align}
    \label{eq:connectr:dangerir-1}
    \tilde{D}_i^{(r-1)}&:=\Big\{\tpl{y}\in\tbinom{Y'}{r-1}\colon
    \tpl{y}\text{ is temporarily dangerous} \Big\}\,,\quad\text{and} \\
    \label{eq:connectr:dangerij}
    \tilde{D}_i^{(j)}&:=\Big\{\tpl{y}\in\tbinom{Y'}{j}\colon
    \deg_{\tilde{D}_i^{(j+1)}}(\tpl{y})\ge\xi' n\Big\}\,, \quad \text{for }r-2\ge
    j\ge 1\,.
  \end{align}
  \end{subequations}

  Summarising, we do not want  to append a vertex~$c\in X\setminus U$ to
  the end $(r-1)$-tuple $\tpl{a}$ of a path in one of our fans, if for
  $\tpl{a}$ or for any end  $(j-1)$-tuple $\tpl{a}_{j-1}$ of~$\tpl{a}$ with
  $j\in[r-2]$ we have
  \begin{enumerate}[label=\rom]
    \item\label{item:Bad:H} $\{\tpl{a},c\}$ is in~$H$,
    \item\label{item:Bad:D} $\{\tpl{a}_{j-1},c\}$ is an edge of $D_i^{(j)}$ or
      of $\tilde{D}_i^{(j)}$, or
    \item\label{item:Bad:U} $\{\tpl{a}_{j-1},c\}$ has multiplicity greater than
      $\xi^{r-j}n^{(r-1)/2-j(1-\eps)}$ in $U_i$.
  \end{enumerate}
  Hence we define the set~$\Bad(\tpl{a})$ of \emph{bad vertices} for
  $\tpl{a}$ to be the set of vertices in $X\setminus U$ for which at least
  one of these conditions applies.

  \smallskip
  
  {\sl Algorithm:}  
  The desired paths $P_i$ will be constructed using
  Algorithm~\ref{alg:connect}. This algorithm constructs for each
  $i\in[k]$ two fans $\cF(\tpl{u}^{(i)})$ and $\cF(\tpl{v}^{(i)})$, using
  Algorithm~\ref{alg:fan} as a subroutine. 

  \begin{algorithm}[t]
   \caption{Connect each pair $\tpl{u}^{(i)}$, $\tpl{v}^{(i)}$ with a path~$P_i$}
    \label{alg:connect}
    $U:=\bigcup_{i\in[k]}\{\tpl{u}^{(i)},\tpl{v}^{(i)}\}$ ; \quad
    $H:=\emptyset$ \;
   \ForEach{$i\in[k]$}{
     \lnl{step:fanu}construct the fan~$\cF(\tpl{u}^{(i)})$ in
      $Y_1\dcup\ldots\dcup Y_{2r}$ \;
     \lnl{step:fanv}construct the fan~$\cF(\tpl{v}^{(i)})$ in
      $Y'_1\dcup\ldots\dcup Y'_{2r}$ \;
      let $L:=L(\tpl{u}^{(i)})$ be the leaves of~$\cF(\tpl{u}^{(i)})$ \;
      let $L':=L(\tpl{v}^{(i)})$ be the leaves of~$\cF(\tpl{v}^{(i)})$
      reversed \;
      $\cP:=$ all $L-L'$-paths of length~$2r-2$ not blocked by~$H$ \;
      \lnl{step:exposepaths}expose all edges which are in some
      $P\in\cP$ \;
    \uIf{one of these paths~$\tpl{\tilde u}^{(i)},\tpl{\tilde v}^{(i)}$
        appears}{
        $P(\tpl{u^{(i)}}):=\text{the path in $\cF(\tpl{u}^{(i)})$ ending with
        $\tpl{\tilde u}^{(i)}$}$ \;
        $P(\tpl{v^{(i)}}):=\text{reversal of the path in $\cF(\tpl{v}^{(i)})$
        ending with $\tpl{\tilde v}^{(i)}$}$ \;
        $P_i:= P(\tpl{u^{(i)}}),\tpl{\tilde u}^{(i)},\tpl{\tilde v}^{(i)},
        P(\tpl{v^{(i)}})$ \; }
        \lnl{step:failure}\lElse{
        halt with \Failure \;
        }
      \lnl{step:U}$U:=U\cup V(P_i)$ \;
      \lnl{step:Hconnect}\lForEach{$\tpl{x}\in
        L(\tpl{u}^{(i)}),\tpl{y}\in L(\tpl{v}^{(i)})$}{
        $H:=H\cup\binom{\tpl{x}\,\cup\,\tpl{y}}{r}$ \;
      }
    }
 \end{algorithm}

  \begin{algorithm}[t]
    \caption{Construct the fan $\cF(\tpl{u}^{(i)})$}
    \label{alg:fan}
    $\cF(\tpl{u}^{(i)}):=\{\tpl{u}^{(i)}\}$ ; \quad
    $U_i:=\emptyset$ ; \quad
    $t:=1$ \;
    \RepeatForever{}{
      $\cP:=\cF(\tpl{u}^{(i)})$ \;
      \lnl{step:forP}
      \ForEach{path $P\in\cP$}{
        let~$\tpl{a}$ be the end $(r-1)$-tuple of~$P$ \;
        \lnl{step:exposeedges} expose all edges $\{\tpl{a},c\}$ with $c\in
        C':=Y_t\setminus(V(P)\cup U\cup \Bad(\tpl{a}))$ \;
        \lnl{step:C} 
        $C:=\{c \colon \{\tpl{a},c\} \text{ appears in previous step}\}$ \;
        \lnl{step:CheckC} \lIf{\NOT $\delta n^\eps/(16r)\le |C|\le\delta
        n^{\eps}/(2r)$}{halt with
        \Failure \;}
        \lnl{step:F}
        $\cF(\tpl{u}^{(i)}) := \big( \cF(\tpl{u}^{(i)})\setminus\{P\}\big)\cup \big\{ (P,c) \colon c\in C
          \big\}$ \;
         \lnl{step:Qj}
        $\tpl{a}_j:=$ last $j$ vertices of $P$ for $j\in[r-2]$ \;
        \lnl{step:Ui} $U_i:=U_i \cup C \cup \bigcup_{c\in
        C}\big\{\{\tpl{a}_j,c\}\colon j\in[r-2]\big\}$ \;
        \lnl{step:Hfan}
        $H:=H\cup\big\{(\tpl{a},c)\colon c\in C'\big\}$
        \;
		\lnl{step:whileF} \lIf{$|\cF(\tpl{u}^{(i)})|\ge
		n^{(r-1)/2-\eps/2}$}{\Return\ \;} }
      $t:=(t \mod 2r) + 1$ \;
   }
 \end{algorithm}
    
  It is clear that the running time (whether the algorithm succeeds or
  fails) is polynomial: Steps~\ref{step:CheckC} and~\ref{step:whileF}
  guarantee that in one call, Algorithm~\ref{alg:fan} runs at most
  $n^{(r-1)/2}$ times through its repeat loop.  Our analysis will
  show that a.a.s.\ the algorithm indeed succeeds.
  
  \smallskip

Before we proceed with the analysis, let us remind the reader that 
$H$ denotes the already exposed hyperedges that appeared so far, $H_i$ consists
 of the hyperedges of $H$ before the start of phase $i$, 
  $U$ is the set of already used vertices 
 and $U_i$ is the auxiliary multihypergraph which is maintained through phase $i$ and 
 records those  $j$-tuples ($j\in[r-1]$)
  that were used for constructing the fan $\cF(\tpl{u}^{(i)})$ ($\cF(\tpl{v}^{(i)})$ resp.).

  \smallskip

  {\sl Analysis:} First, we claim that the algorithm is valid in that it
  does not try to expose any $r$-set twice. To see this, we need to check
  that at steps~\ref{step:exposepaths} and~\ref{step:exposeedges}, we do
  not attempt to re-expose an already exposed $r$-set. Since we do not
  expose any $r$-set in $H$ at either step (by the definition of
  $\Bad(\tpl{a})$), it is enough to check that after either step, all
  exposed $r$-sets are added to $H$ before the next visit to either step.
  This takes place in steps~\ref{step:Hconnect} and~\ref{step:Hfan}.
  
  In order to show that the algorithm succeeds, we need to
  show that the following hold with sufficiently high probability for
  each $i\in[k]$. \begin{enumerate}[label=\itm{A\arabic{*}}, start=1]
    \item\label{item:connect:A2} Algorithm~\ref{alg:fan} successfully builds
    the fans~$\cF(\tpl{u}^{(i)})$ and~$\cF(\tpl{v}^{(i)})$, that is, the
      condition in step~\ref{step:whileF} eventually becomes true, and the
      condition in step~\ref{step:CheckC} never becomes true.
    \item\label{item:connect:A3} If this is the case, then
      Algorithm~\ref{alg:connect} successfully constructs~$P_i$, that is,
      one of the paths exposed in step~\ref{step:exposepaths} appears.
    \item\label{item:connect:A4} If this is the case, then~$P_i$ is
      of length at most~$s=\frac{r-1}{\eps}$, that is, the
      fans~$\cF(\tpl{u}^{(i)})$ and~$\cF(\tpl{v}^{(i)})$ have length at most~$s/2$.
  \end{enumerate}

  It is straightforward to see that~\ref{item:connect:A4} holds. Indeed, if
  Algorithm~\ref{alg:fan} succeeds in step~$i$, then in the last repetition
  of the for-loop creating $\cF(\tpl{u}^{(i)})$, the width of
  $\cF(\tpl{u}^{(i)})$ finally exceeds $n^{(r-1)/2-\eps/2}$. Since by
  step~\ref{step:CheckC} at most
  $|C|\le \delta n^\eps/(2r)<n^{(r-1)/2-\eps/2}$ paths are added to
  $\cF(\tpl{u}^{(i)})$ in this last for-loop (and the same holds for
  $\cF(\tpl{v}^{(i)})$), we obtain for the width of $\cF(\tpl{u}^{(i)})$
  and $\cF(\tpl{v}^{(i)})$ (which equals the number of their leaves) that
  \begin{equation}\label{eq:connect:L}
    n^{(r-1)/2-\eps/2}\le
    \big|L(\tpl{u}^{(i)})\big|,\big|L(\tpl{v}^{(i)})\big|\le
    2n^{(r-1)/2-\eps/2}\,.
  \end{equation}
  Now observe that by step~\ref{step:CheckC} the fan $\cF(\tpl{u}^{(i)})$
  (and similarly~$\cF(\tpl{v}^{(i)})$) has width at least $\big(\delta
  n^{\eps}/(16r)\big)^{s_i}$, where $s_i$ is the length
  of~$\cF(\tpl{u}^{(i)})$. For $s_i\ge(r-1)/(2\eps)$ this would imply
  $\big|L(\tpl{u}^{(i)})\big|\ge \big(\delta
  n^{\eps}/(16r)\big)^{(r-1)/(2\eps)}>n^{(r-1)/2-\eps/2}$,
  contradicting~\eqref{eq:connect:L}.
  Hence we have~\ref{item:connect:A4}.
   
  For proving~\ref{item:connect:A2} and~\ref{item:connect:A3}, we first
  show bounds on various quantities during the running of the algorithm.
  For a set $\tpl{a}$ in the multiset $U_i$ with $i\in[k]$, we write
  $\mult_{U_i}(\tpl{a})$ for the multiplicity of~$\tpl{a}$ in~$U_i$.

  \begin{claim}\label{cl:connect:F}
    If phase~$i$ and all phases before succeed, then the following hold throughout phase~$i$.
    \begin{enumerate}[label=\abc]
      \item\label{item:connect:U} $|U|\le k\big(s+2(r-1)\big)\le 2kr/\eps$.
      \item\label{item:connect:Ui} For each $j\in[r-1]$ and each $j$-set
        $\tpl{a}\in U_i$ we have
        \[\mult_{U_i}(\tpl{a})\le\xi^{r-j}n^{((r-1)/2)-j(1-\eps)}+1\,.\]
     \item\label{item:connect:Uij} For each $j\in[r-1]$ and each
        $(j-1)$-set $\tpl{a}$ in $[n]$, for all but $\xi n$ vertices
        $c\in X$ we have \[\mult_{U_i}(\{\tpl{a},c\})\le\xi^{r-j}
        n^{((r-1)/2)-j(1-\eps)}\,.\]
      \item\label{item:connect:H} $e(H_{i+1})\le 2^{2r+1}(i+1)n^{r-1-\eps/2}$.
      \item\label{item:connect:J} At step~\ref{step:exposeedges} in
      Algorithm~\ref{alg:fan}, we have $|Y_t\setminus(V(P)\cup U\cup  \Bad(\tpl{a}))|\ge\delta n/(8r)$.
   \end{enumerate}
  \end{claim}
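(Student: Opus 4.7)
The plan is to verify items \ref{item:connect:U}--\ref{item:connect:J} by induction on $i$, each within a single phase under the assumption that the preceding phases have succeeded. The items cascade in the natural way: \ref{item:connect:U} feeds \ref{item:connect:J}, and \ref{item:connect:Ui}--\ref{item:connect:Uij} together with \ref{item:connect:H} bound the size of $\Bad(\tpl{a})$ that appears in \ref{item:connect:J}.

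Items \ref{item:connect:U} and \ref{item:connect:Ui} are bookkeeping. For \ref{item:connect:U}, $|U|$ begins at $2k(r-1)$ and grows by at most $s=(r-1)/\eps$ per completed phase by \ref{item:connect:A4}, giving $|U|\le k(s+2(r-1))\le 2kr/\eps$. For \ref{item:connect:Ui}, at the moment when step~\ref{step:Ui} adds $\{\tpl{a}_{j-1},c\}$ to $U_i$ the vertex $c$ had just passed the Bad test, so by condition~\itm{iii} the then-current multiplicity of this $j$-set in $U_i$ was at most $\xi^{r-j}n^{(r-1)/2-j(1-\eps)}$; the single increment raises it by at most~$1$.

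Item~\ref{item:connect:Uij} is the main averaging step. The plan is to bound the total ``degree'' of $\tpl{a}$ in the $j$-uniform multihypergraph $U_i^{(j)}$, that is, $\sum_c\mult_{U_i}(\{\tpl{a},c\})$. Every contributing addition comes from step~\ref{step:Ui} in the form $\{\tpl{b},c'\}$ with $\tpl{b}$ the last $j-1$ vertices of some fan path, and containment of $\tpl{a}$ forces either $\tpl{b}=\tpl{a}$, or $c'\in\tpl{a}$ with $\tpl{b}\supseteq\tpl{a}\setminus\{c'\}$. The number of such additions is controlled by the total fan width across levels, which by~\eqref{eq:connect:L} and geometric summation is $O(n^{(r-1)/2-\eps/2})$, times the per-path branching $|C|\le\delta n^\eps/(2r)$. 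If more than $\xi n$ vertices $c$ violated the threshold, the total degree would exceed this estimate, contradicting the choice $\xi=(\xi')^r/(r^2(r-1)!)$. I anticipate running this as a downward induction on $j$ from $r-1$ to~$1$, using~\ref{item:connect:Ui} at the previous level to limit how many paths can share a given $(j-1)$-tail. This is the main technical obstacle.

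Item~\ref{item:connect:H} is a direct count of the edges added to $H$ per phase: step~\ref{step:exposeedges} contributes at most $O(n^{(r-1)/2-\eps/2})\cdot|Y_t|=O(n^{(r+1)/2-\eps/2})$ edges across both fans; step~\ref{step:exposepaths} contributes $|L||L'|(r-1)\le4(r-1)n^{r-1-\eps}$; and step~\ref{step:Hconnect} contributes $|L||L'|\binom{2r-2}{r}\le 2^{2r}n^{r-1-\eps}$. Since $(r+1)/2\le r-1$ for $r\ge3$, each source fits within $2^{2r}n^{r-1-\eps/2}$, and summing over $i+1$ phases gives the stated bound. Finally, item~\ref{item:connect:J} combines everything: $|V(P)|\le s=O(1/\eps)$; $|U|$ via \ref{item:connect:U} and the choice of $\eta$; and $|\Bad(\tpl{a})|$ via condition~\itm{i} ($\deg_H(\tpl{a})<\xi n$ since the fan invariant gives $\tpl{a}\notin D_i^{(r-1)}$), condition~\itm{ii} (the same invariant applied to the consecutive sub-tuple $\tpl{a}_{j-1}$ yields $\deg_{D_i^{(j)}\cup\tilde D_i^{(j)}}(\tpl{a}_{j-1})<(\xi+\xi')n$), and condition~\itm{iii} (item~\ref{item:connect:Uij} gives at most $\xi n$ bad $c$ per level). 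Summing over $j\in[r-2]$ and using the choices in~\eqref{eq:setxis} keeps $|Y_t\cap(V(P)\cup U\cup\Bad(\tpl{a}))|$ below $\delta n/(8r)$, which suffices.
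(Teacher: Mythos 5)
Your overall structure mirrors the paper's proof: items~\ref{item:connect:U} and~\ref{item:connect:Ui} are bookkeeping, \ref{item:connect:Uij} is an averaging argument using~\ref{item:connect:Ui} at the lower level, \ref{item:connect:H} is a count over phases, and these feed into~\ref{item:connect:J}. However, your sketch of~\ref{item:connect:J} has genuine gaps at the two most delicate points of the paper's argument.

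First, for condition~\itm{i} you claim ``$\deg_H(\tpl{a})<\xi n$ since the fan invariant gives $\tpl{a}\notin D_i^{(r-1)}$.'' That conclusion only controls~$H_i$: by its definition in~\eqref{eq:connectr:dangerr-1}, $D_i^{(r-1)}$ is frozen at the start of phase~$i$, whereas the $\Bad$ condition refers to the \emph{current}~$H$, which grows throughout the phase. One must further argue that no significant number of new $r$-sets $\{\tpl{a},c\}$ with $c\in Y_t$ got added to $H$ during phase~$i$. The paper does this by invoking~\ref{item:connect:Ui} to show that $\tpl{a}$ has $U_i$-multiplicity at most $2$, so at most one edge containing $\tpl{a}$ was added in step~\ref{step:Hfan}, and that edge (the current end $r$-tuple of $P$) has no vertex in $Y_t$ because successive fan levels sit in successive $Y_{t'}$. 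Your proposal omits this step entirely.

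Second, for condition~\itm{ii} your invariant ``$\tpl{a}_{j-1}\notin D_i^{(j-1)}\cup\tilde D_i^{(j-1)}$ implies $\deg_{D_i^{(j)}\cup\tilde D_i^{(j)}}(\tpl{a}_{j-1})<(\xi+\xi')n$'' breaks down at $j=1$, where $\tpl{a}_0$ is empty and there is no $D_i^{(0)}$ or $\tilde D_i^{(0)}$. You need direct, separate bounds $|D_i^{(1)}|\le\xi n$ and $|\tilde D_i^{(1)}|\le\xi'n$. The first follows from~\ref{item:connect:H} via the chained-degree argument ($|D_i^{(1)}|>\xi n$ would force $e(H_i)\ge(\xi n)^r/r!$, contradicting~\ref{item:connect:H}), which is not the same as ``the same invariant applied to $\tpl{a}_{j-1}$.'' The second is the hardest part of the paper's proof: one has to show that if $|\tilde D_i^{(1)}|>\xi'n$, the number of blocked pairs of leaves exceeds $(r-1)^2\xi n^{r-1}|L(\tpl{u}^{(i)})|$, and then contradict this by the ``property P'' count (for each leaf $\tpl{x}$ of $\cF(\tpl{u}^{(i)})$, at most $(r-1)^2\xi n^{r-1}$ tuples $\tpl{y}$ in $Y'$ are blocked, because the last $j$ vertices of $\tpl{x}$ avoid $D_i^{(j)}$). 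Your proposal does not acknowledge that this argument is needed; without it the bound on $\Bad(\tpl{a})$ has no basis for $j=1$.

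A minor point on~\ref{item:connect:Uij}: what you call a downward induction from $r-1$ to $1$ does not match the dependency structure — the step for $j$-sets uses~\ref{item:connect:Ui} for $(j-1)$-sets, so the natural direction is upward (the paper goes from $j=1$ upward). Since~\ref{item:connect:Ui} is established uniformly in $j$ first, the direction is largely cosmetic, but as written your dependency is backwards. On~\ref{item:connect:H}, your separate account of step~\ref{step:exposepaths} is harmless but redundant: the $r$-sets exposed there are all contained in $\binom{\tpl{x}\cup\tpl{y}}{r}$, which step~\ref{step:Hconnect} already adds to~$H$, and which the paper charges only once.
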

  Observe that for $j\ge r/2$ Claim~\ref{cl:connect:F}\ref{item:connect:Ui}
  implies that we always have $\mult_{U_i}(\tpl{a})\le 1$ for any $j$-tuple
  used in any $\cF(\tpl{u}^{(i)})$ or $\cF(\tpl{v}^{(i)})$. This shows
  that $\cF(\tpl{u}^{(i)})$ and $\cF(\tpl{v}^{(i)})$ are indeed fans, as we claim.

  \begin{claimproof}[Proof of Claim~\ref{cl:connect:F}]
    We first prove~\ref{item:connect:U}.
    The set $U$ contains the $2k(r-1)$ vertices of the $k$ pairs of
    $(r-1)$-tuples which we wish to connect, together with all the vertices of
    the paths thus far constructed. Since by~\ref{item:connect:A4} these
    paths are of length at most~$s$, it follows that $|U|\le 2k(r-1)+(i-1)s\le
    k\big(s+2(r-1)\big)$.

    \smallskip

    To see that~\ref{item:connect:Ui} holds, observe that $j$-sets are
    added to~$U_i$ only at step~\ref{step:Ui}, and at this point the sets
    added are distinct: two sets either contain different members of $Y_t$,
    or they are of different sizes. Moreover, they are added only if their
    multiplicity in $U_i$ is at most $\xi^{r-j} n^{(r-1)/2-j(1-\eps)}$
    by~\ref{item:Bad:U} in the definition of $\Bad(\tpl{a})$.

    \smallskip

    For~\ref{item:connect:Uij} we proceed by induction on~$j$.  First
    consider the case~$j=1$. Observe that $c\in X$ is added to~$U_i$ in
    step~\ref{step:Ui} only if it is added at the end of a path~$P$. Since
    step~\ref{step:CheckC} guarantees that each fan grows by a factor of at
    least~$2$ in each iteration, we have
    \begin{equation*}
      \sum_{c\in X} \mult_{U_i}(c)
      \le 2\big(|L(\tpl{u}^{(i)})|+L(\tpl{v}^{(i)})|\big)
      \leByRef{eq:connect:L} 4n^{(r-1)/2-\eps/2}<\xi^rn^{(r-1)/2}\,.
    \end{equation*}
   We conclude that there are at most
    \begin{equation*}
      \frac{\xi^rn^{(r-1)/2}}
      {\xi^{r-1}n^{((r-1)/2)-1+\eps}}
      = \xi n^{1-\eps}
    \end{equation*}
    vertices $c\in X$ with $\mult_{U_i}(c)>\xi^{r-1}n^{((r-1)/2)-1+\eps}$.

    Now assume that~\ref{item:connect:Uij} holds for $j-1$ and
    let~$\tpl{a}$ be a $(j-1)$-set in $[n]$. Similarly as before, for $c\in X$ the set
    $\{\tpl{a},c\}$ is in $U_i$ with multiplicity equal to the number of
    times that~$\tpl{a}$ has appeared as the end of a path~$P$ in one of
    the two fans constructed in this phase
    and the path $(P,c)$ was subsequently added to the fan in
    step~\ref{step:F}. Since we did not previously halt in
    step~\ref{step:CheckC}, for any $P$ there are at most $\delta
    n^\eps/(2r)\le\frac12 n^{\eps}$ vertices $c\in X$ such that $(P,c)$ is added in this way.
    Thus we have
    \begin{equation}\label{eq:connect:mult}
      \sum_{c\in X} \mult_{U_i}(\tpl{a},c)\le \mult_{U_i}(\tpl{a})\cdot
      \tfrac{1}{2} n^{\eps}\,.
    \end{equation}
    By~\ref{item:connect:Ui} we know in addition that
    \[\mult_{U_i}(\tpl{a})\le\xi^{r-j+1}n^{((r-1)/2)-(j-1)(1-\eps)}+1\,.\] Note
    that if this bound is less than~$2$ then
    \eqref{eq:connect:mult} directly implies that there are at most~$\xi n$
    vertices~$c$ with $\mult_{U_i}(\tpl{a},c)\ge 1$ and we are done. Hence
    we may assume
    $\mult_{U_i}(\tpl{a})\le2\xi^{r-j+1}n^{((r-1)/2)-(j-1)(1-\eps)}$. This
    together with~\eqref{eq:connect:mult} also implies that there are at
    most
    \begin{equation*}
      \frac{2\xi^{r-j+1}n^{((r-1)/2)-(j-1)(1-\eps)} \cdot \frac12
        n^{\eps}}{\xi^{r-j}n^{((r-1)/2)-j(1-\eps)}}
      =\xi n
    \end{equation*}
    vertices $c\in X$ with
    $\mult_{U_i}(\tpl{a},c)\ge\xi^{r-j}n^{((r-1)/2)-j(1-\eps)}$, as desired.

    \smallskip

    For the remaining parts of the claim, we proceed by induction on the
    phase~$i\in[k]$.  So assume that the claim holds at the end of the
    $(i-1)$st phase.

    We next prove~\ref{item:connect:H}.  At the end of phase~$i$, the
    hypergraph $H$ contains all the $r$-sets which it had at the end of
    phase $i-1$, together with all those added in phase~$i$.  Now consider
    the construction of one fan in phase~$i$, say of
    $\cF(\tpl{u}^{(i)})$. Since we did not halt in
    step~\ref{step:CheckC}, the width of the fan grows exponentially, more than
doubling at each step. Thus we can bound the total number of iterations of
    the for-loop by the number $|L(\tpl{u}^{(i)})|$ of leaves of this fan
    (cf.\ step~\ref{step:F}).
    In each of these iterations, we exposed
    $|Y_t\setminus(P\cup U\cup \Bad(\tpl{a}))|<n$ of the $r$-sets and added
    them to~$H$. Hence, while constructing $\cF(\tpl{u}^{(i)})$ (and
    similarly for $\cF(\tpl{v}^{(i)})$), we added
    at most $|L(\tpl{u}^{(i)})|n$ new $r$-sets to~$H$. 
    The only other step where we add $r$-tuples to~$H$ is
    step~\ref{step:Hconnect}. In this step, for each pair of leaves of
    $\cF(\tpl{u}^{(i)})$ and $\cF(\tpl{v}^{(i)})$, we add
    $\binom{2r-2}{r}$ new $r$-sets to $H$. Using the induction hypothesis
    we thus conclude that at the end of phase~$i$ we have
    \begin{equation*}\begin{split}
      e(H_{i+1}) &\le e(H_i) + \big(|L(\tpl{u}^{(i)})|+|L(\tpl{v}^{(i)})|\big)n
      + \tbinom{2r-2}{r} |L(\tpl{u}^{(i)})|\cdot|L(\tpl{v}^{(i)})| \\
      & \leByRef{eq:connect:L}
      2^{2r+1} i\cdot n^{r-1-\eps/2}
      + 4 n^{(r+1)/2-\eps/2} \cdot n
      + \tbinom{2r-2}{r} 4n^{r-1-\eps} \\
      & \le 2^{2r+1}(i+1)n^{r-1-\eps/2} \,,
    \end{split}\end{equation*}
    where for the final inequality we use the fact that $(r+1)/2\le r-1$, which
    holds since $r\ge 3$. This is the only step in the analysis where we use
    $r\ge 3$, but this analysis is reasonably tight: the algorithm does fail
for $r=2$.

    \smallskip
      
    Last we prove~\ref{item:connect:J}, for which we additionally proceed
    by induction on the number~$f$ of iterations through the for-loop of
    Algorithm~\ref{alg:fan} done in the $i$th phase so far.  So we assume
    that the claim holds at the end of the $(i-1)$st phase and after
    $f-1$ iterations.

    Let $P$ be the path considered in iteration~$f$ of this for-loop,
    and $\tpl{a}$ the $(r-1)$-tuple ending $P$.  We would like to estimate
    the size of $\Bad(\tpl{a})\cap Y_t$.  Keep in mind in the following analysis
    that for $j\in[r-1]$ the hypergraph~$D_i^j$ does not change during
    phase~$i$, by definition. Similarly, $\tilde D_i^j$ does not change
    once the fan $\cF(\tpl{u}_i)$ is constructed.

    Now let us first assess the effect of~\ref{item:Bad:H} of the
    definition of $\Bad(\tpl{a})$.  Since~$\tpl{a}$ is the end of a path
    constructed by Algorithm~\ref{alg:fan}, step~\ref{step:exposeedges}
    implies that the last vertex~$b$ of~$\tpl{a}$ is not contained in
    $\Bad(\tpl{b})$ where~$\tpl{b}$ is the end $(r-1)$-tuple of
    $P-b$. From~\ref{item:Bad:D} in the definition of $\Bad(\tpl{b})$ we
    conclude that $\tpl{a}\notin D_i^{(r-1)}$. Thus, by the definition of
    $D_i^{(r-1)}$ in~\eqref{eq:connectr:dangerr-1}, the number of edges in
    $H_i$ containing $\tpl{a}$ is smaller than $\xi n$.
    
    But how many edges $\{\tpl{a},c\}$ with $c\in Y_t$ did phase~$i$
    add to $H$ so
    far?  By~\ref{item:connect:Ui} the set~$\tpl{a}$ has multiplicity at
    most $\xi n^{(r-1)(2\eps-1)/2}+1<2$ in~$U_i$. It follows that since the
    start of phase~$i$ only one edge containing~$\tpl{a}$ was added to~$H$
    in step~\ref{step:Hfan}: the end $r$-tuple $\tpl{a}_r$ of~$P$.
    However, since~$\tpl{a}_r$ contains no vertices of~$Y_t$ because the
    algorithm takes successive levels of the fan in successive~$Y_{t'}$
    (or~$Y'_{t'}$), we conclude that the current phase did not add any
    additional edges $\{\tpl{a},c\}$ to~$H$ with $c\in Y_t$.

    Now let us estimate the number of vertices~$c\in Y_t$
    which~\ref{item:Bad:D} of the definition of $\Bad(\tpl{a})$ forbids.
    First, we need to consider the case $j=1$, and show that the number of
    vertices in $D_i^{(1)}$ is at most
    $\xi n$. Suppose not, and observe that by
    definition~\eqref{eq:connectr:dangerj}, each vertex in $D_i^{(1)}$ extends to
    at least $\xi n$ pairs in $D_i^{(2)}$, and so on, where at the final step
    each constructed member of $D_i^{(r-1)}$ extends to at least $\xi n$
    members of $H_i$. We can construct any given member of $H_i$ in at most $r!$
    ways, so we conclude that $e(H_i)\ge(\xi n)^r/r!$, which (for
    sufficiently large $n$) contradicts part~\ref{item:connect:H}.

    Next, again for the case $j=1$, we need to show that further there are at
    most $\xi' n$ vertices in $\tilde{D}_i^{(1)}$. Again, suppose not: then
    as above this implies that the number of pairs of $(r-1)$-tuples
    $(\tpl{x},\tpl{y})$ with $\tpl{x}\in L\big(\tpl{u}^{(i)}\big)$ and
    $\tpl{y}$ contained in $Y'_1\cup\ldots\cup Y'_{2r}$ is at least
    \begin{equation}\label{eq:connect:tempdang}
     \xi'\Big|L\big(\tpl{u}^{(i)}
     \big)\Big|\cdot(\xi'n)^ { r-1 } /(r-1)! \geByRef{eq:setxis}r^2\xi
     n^{r-1}\Big|L\big(\tpl{u}^{(i)}\big)\Big|\,.
    \end{equation}
    However, by construction of $\cF(\tpl{u}^{(i)})$, for each $j\in[r-1]$ and
    each $\tpl{x}\in L\big(\tpl{u}^{(i)}\big)$, we have the property that the
    last $j$ vertices of $\tpl{x}$ are not in $D_i^{(j)}$. We claim that this
    implies that the number of $(r-1)$-tuples $\tpl{y}$ contained in
    $Y'_1\cup\ldots\cup Y'_{2r}$ such that $(\tpl{x},\tpl{y})$ is blocked by
    $H_i$, is at most $(r-1)^2\xi n^{r-1}$, which is a contradiction
    to~\eqref{eq:connect:tempdang}. To see this, consider the following property
    P of tuples $\tpl{y}$. For each $r-1\ge j\ge 1$ and each $1\le k\le r-j$, the
    tuple consisting of the last $j$ vertices of $\tpl{x}$ followed by the first
    $k$ vertices of $\tpl{y}$ is not in $D_i^{(j+k)}$ (if $j+k<r$) and not in
    $H_i$ (if $j+k=r$). If $\tpl{y}$ has property P, then clearly the pair
    $(\tpl{x},\tpl{y})$ is not blocked by $H_i$. On the other hand, if $\tpl{y}$
    does not have P, then there is a smallest $k$ for which P fails. By
    definition of the sets $D_i^{(j+k)}$, for a fixed~$j$ given the first $k-1$ vertices of
    $\tpl{y}$ there are at most~$\xi n$ choices for the $k$-th vertex of
    $\tpl{y}$. Hence, in total, given the first $k-1$ vertices of
    $\tpl{y}$ there are at most $(r-1)\xi n$ choices for the $k$-th vertex of
    $\tpl{y}$. Thus the number of $(r-1)$-tuples $\tpl{y}$ which do not have
    P is at most $(r-1)^2\xi n^{r-1}$ as desired.

    Now given $2\le j\le r-2$, let $\tpl{a}_{j-1}$ be the set consisting of the
    last $j-1$ vertices of $\tpl{a}$. By construction, $\tpl{a}_{j-1}$ is in
    neither $D_i^{(j-1)}$ nor in $\tilde{D}_i^{(j-1)}$.  It follows from
    the definition of these sets in~\eqref{eq:connectr:dangerj}
    and~\eqref{eq:connectr:dangerij} that there are at most $\xi n$
    vertices $c$ such that $\{\tpl{a}_{j-1},c\}\in D_i^{(j)}$, and at most $\xi'
    n$ such that $\{\tpl{a}_{j-1},c\}\in \tilde{D}_i^{(j)}$. Together with
    the case $j=1$, this gives at most $(r-2)(\xi+\xi')n$ forbidden vertices
    $c\in Y_t$.

    Finally, for~\ref{item:Bad:U}, observe that by
    part~\ref{item:connect:Ui}, for each $j\in[r-2]$ there are at most $\xi
    n$ vertices $c\in X$ with
    $\mult_{U_i}\big(\{\tpl{a}_{j-1},c\}\big)>\xi^{r-j} n^{\frac{r-1}2-j(1-\eps)}$.

    Hence, in total, $\Bad(\tpl{a})\cap Y_t$ contains at most
   \[\xi n + (r-2)(\xi+\xi')n+(r-2)\xi n \leByRef{eq:setxis} \frac{\delta n}{4r}\]
   vertices. Moreover, it follows from~\ref{item:connect:A4} that $|P|\le
   r/\eps$, and from~\ref{item:connect:U} that $|U|\le 2kr/\eps$. Since we
   have $|Y_t|=\delta n/(2r)$, we conclude that
   \[|Y_t\setminus(P\cup U\cup  \Bad(\tpl{a}))|\ge \frac{\delta n}{2r} -
   \frac{r}{\eps} - \frac{2kr}{\eps} -\frac{\delta n}{4r}
   \ge\frac{\delta n}{8r}\]
 \end{claimproof}
  
 Now we can use a Chernoff bound to show that a.a.s.\
 Algorithm~\ref{alg:fan} does not fail in step~\ref{step:CheckC}.
 
  \begin{claim}\label{cl:connect:checkC}
    At any given visit to step~\ref{step:CheckC}, Algorithm~\ref{alg:fan} halts
    with probability at most $2\exp\big(-\delta n^\eps/(96r)\big)$.
  \end{claim}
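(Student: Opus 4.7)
The plan is a straightforward one-shot Chernoff argument applied to the random variable $|C|$ defined in step~\ref{step:C} of Algorithm~\ref{alg:fan}. The strategy has three ingredients: identify the distribution of~$|C|$, sandwich its mean~$\mu$ between the two thresholds of step~\ref{step:CheckC}, and then plug into~\eqref{eq:chernoff}.

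First I would identify the distribution of~$|C|$. The $r$-sets $\{\tpl{a},c\}$ with $c\in C'$ are pairwise distinct and, by condition~\ref{item:Bad:H} in the definition of $\Bad(\tpl{a})$ (which excludes any $c$ such that $\{\tpl{a},c\}\in H$), none of them lies in the current expos\'e hypergraph~$H$. Hence, by the validity argument preceding Claim~\ref{cl:connect:F}, step~\ref{step:exposeedges} exposes these $r$-sets with fresh independent coin flips of probability $p=n^{-1+\eps}$ each, so $|C|\sim\Bin(|C'|,p)$ and $\mu:=\Exp|C|=|C'|\cdot n^{-1+\eps}$.

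Next I would sandwich~$\mu$. From Claim~\ref{cl:connect:F}\ref{item:connect:J} we have $|C'|\ge\delta n/(8r)$, giving $\mu\ge\delta n^\eps/(8r)$. On the other hand, $C'\subseteq Y_t$, and since $X$ (of size $\delta n$) is equipartitioned into $4r$ blocks we have $|Y_t|=\delta n/(4r)$, giving $\mu\le\delta n^\eps/(4r)$. Consequently the lower threshold satisfies $\delta n^\eps/(16r)\le\mu/2$ and the upper threshold satisfies $\delta n^\eps/(2r)\ge 2\mu$, so the halting event $\{|C|\notin[\delta n^\eps/(16r),\delta n^\eps/(2r)]\}$ is contained in the event $\{\,\bigl||C|-\mu\bigr|\ge\mu/2\,\}$.

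Finally, applying~\eqref{eq:chernoff} with $\gamma=1/2$ yields
\[
  \Prob\Big(\bigl||C|-\mu\bigr|\ge\tfrac12\mu\Big)
  \le 2\exp(-\mu/12)
  \le 2\exp\bigl(-\delta n^\eps/(96r)\bigr),
\]
which is exactly the bound claimed. There is no real obstacle: the only subtle point is the no-double-exposure verification (so that $|C|$ really is a fresh binomial), and this is already guaranteed by the algorithm's design via $\Bad(\tpl{a})$ and the addition to~$H$ in step~\ref{step:Hfan}.
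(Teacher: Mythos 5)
Your proof is correct and follows essentially the same route as the paper's: you apply Claim~\ref{cl:connect:F}\ref{item:connect:J} and the fact that $|Y_t|=\delta n/(4r)$ to sandwich $\Exp|C|$ in $[\delta n^\eps/(8r),\delta n^\eps/(4r)]$, observe that this sandwiches the halting event inside the deviation event $\{\bigl||C|-\Exp|C|\bigr|\ge\tfrac12\Exp|C|\}$, and invoke the Chernoff bound~\eqref{eq:chernoff} with $\gamma=1/2$. The only difference is that you spell out the no-double-exposure justification for $|C|$ being genuinely binomial, which the paper takes as already established.
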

  \begin{claimproof}
    By Claim~\ref{cl:connect:F}\ref{item:connect:J}, we have \[\delta
    n/(8r)\le |Y_t\setminus(P\cup U\cup \Bad(\tpl{a}))|\le |Y_t|=\delta
    n/(4r)\,.\] Since $C$ is a $p$-random subset of $Y_t\setminus(P\cup U\cup
    \Bad(\tpl{a}))$ with $p=n^{-1+\eps}$, we obtain
    $\delta n^{\eps}/(8r)\le\Exp|C|\le \delta n^\eps/(4r)$.
    Using the Chernoff bound~\eqref{eq:chernoff} with $\gamma=1/2$, we
    conclude that
    $\delta n^{\eps}/(16r)\le |C|\le \delta
    n^\eps/(2r)$
    with probability at least $1-2\exp\big(-\delta n^\eps/(96r)\big)$.
  \end{claimproof}

  We would like to show that also a.a.s.\ Algorithm~\ref{alg:connect} does
  not fail in step~\ref{step:failure}. Since the events considered in this
  step are not mutually independent, we use Janson's inequality for this
  purpose.
  
  \begin{claim}\label{cl:connect:failure}
    At any given visit to step~\ref{step:failure}, Algorithm~\ref{alg:connect}
    halts with probability at most $\exp(-n^{(r-2)\eps}/4)$.
  \end{claim}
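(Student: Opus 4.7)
The strategy is Janson's inequality~\eqref{eq:Janson} applied to $X=\sum_{P\in\cP} I_P$, where $I_P$ is the indicator that all $r-1$ edges of the connecting path $P$ appear among the $r$-sets exposed at step~\ref{step:exposepaths}; Algorithm~\ref{alg:connect} halts at step~\ref{step:failure} precisely when $X=0$.

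I would begin by lower-bounding $\Exp X=|\cP|\,p^{r-1}$. From~\eqref{eq:connect:L} we have $|L|,|L'|\ge n^{(r-1)/2-\eps/2}$ and hence $|L|\,|L'|\ge n^{r-1-\eps}$. Every $\tpl{y}\in L'$ arises from a leaf of $\cF(\tpl{v}^{(i)})$ which, by the construction of the fan, avoids $\tilde D_i^{(r-1)}$; thus fewer than $\xi'\,|L|$ tuples of $L$ form with $\tpl{y}$ a concatenation blocked by $H$. It follows that $|\cP|\ge(1-\xi')\,|L|\,|L'|\ge\tfrac12\,n^{r-1-\eps}$, so
\[
\Exp X \;\ge\; \tfrac12\,n^{r-1-\eps}\cdot n^{(r-1)(\eps-1)} \;=\; \tfrac12\,n^{(r-2)\eps}.
\]

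The bulk of the work is to show $\Delta\le\tfrac12\Exp X$, where $\Delta=\tfrac12\sum_{P\ne P',\ e(P)\cap e(P')\ne\emptyset}p^{2(r-1)-k}$ with $k=|e(P)\cap e(P')|\in\{1,\ldots,r-1\}$. Writing $\Delta=\sum_k\Delta_k$, I would bound $\Delta_k$ as follows. Given $P$ and a choice of the positions of the $k$ shared edges inside $P$ and $P'$ (there are only $O(1)$ such choices), the shared edges pin down at least $r+k-1$ vertices of $P'$, split as $j_x$ positions in $\tpl{x}'$ and $j_y$ positions in $\tpl{y}'$ with $j_x,j_y\ge k$ and $j_x+j_y\ge r+k-1\ge r$. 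By Claim~\ref{cl:connect:F}\ref{item:connect:Ui} (which, for $j\ge r/2$, encodes the fan property), the number of $\tpl{x}'\in L$ with a given suffix of length $j_x$ is at most $1$ when $j_x\ge r/2$ and at most $O\big(n^{(r-1)/2-j_x(1-\eps)}\big)$ otherwise, and the analogous estimate holds for prefixes of tuples in $L'$. Since $j_x+j_y\ge r$ forces at most one of $j_x,j_y$ to lie below $r/2$, the product of the two multiplicities is at most $O\big(n^{(r-1)/2-\min(j_x,j_y)(1-\eps)}\big)=O\big(n^{(r-1)/2-k(1-\eps)}\big)$.

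Combining these ingredients yields
\[
\frac{\Delta_k}{\Exp X} \;\le\; O\!\big(n^{(r-1)/2-k(1-\eps)}\,p^{r-1-k}\big) \;=\; O\!\big(n^{(r-1)(\eps-1/2)}\big),
\]
uniformly in $k$; this is $o(1)$ since $\eps<1/(4r)<1/2$. Summing over $k\le r-1$ gives $\Delta\le\tfrac12\Exp X$ for $n$ large, and Janson's inequality~\eqref{eq:Janson} then yields
\[
\Prob(X=0)\;\le\;\exp(\Delta-\Exp X)\;\le\;\exp(-\tfrac12\Exp X)\;\le\;\exp\big(-n^{(r-2)\eps}/4\big).
\]
The main obstacle is the $\Delta$ bound: one must translate ``$P$ and $P'$ share $k$ edges of a tight path'' into a precise statement about how many positions of $\tpl{x}'$ and $\tpl{y}'$ are determined -- handling the mildly annoying case where the shared edges of $P$ are not at consecutive positions -- and then invoke the correct multiplicity regime ($j\ge r/2$ versus $j<r/2$) on each side.
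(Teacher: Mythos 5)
Your proposal is essentially the paper's own argument: Janson's inequality applied to $X=\sum_P I_P$, the $\Exp X$ lower bound via the temporarily-dangerous sets $\tilde D_i^{(r-1)}$, and the $\Delta$ bound via the multiplicity estimates of Claim~\ref{cl:connect:F}\ref{item:connect:Ui}.

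One imprecision worth flagging in your $\Delta$ computation. You write that the product of the two leaf-multiplicities is $O\bigl(n^{(r-1)/2-\min(j_x,j_y)(1-\eps)}\bigr)$ and then substitute $\min(j_x,j_y)\ge k$. In the regime where both $j_x,j_y\ge r/2$, each multiplicity is at most $1$, so the product is at most $1$, whereas $n^{(r-1)/2-\min(j_x,j_y)(1-\eps)}$ is then strictly less than $1$; so the stated bound is literally false there. The paper sidesteps this by noting that the fan property forces strictly more: two distinct paths $P,P'\in\cP$ must have $\tpl{\tilde u}=\tpl{\tilde u}'$ or $\tpl{\tilde v}=\tpl{\tilde v}'$, whence $k=j<r/2$ always, and the problematic regime never arises for $P'\ne P$. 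In your framing, the fix is the same observation: when $j_x,j_y\ge r/2$ both leaves are uniquely determined, so the only candidate is $P'=P$, which is excluded from $\Delta$; hence that configuration contributes $0$, and the bound you need for $\Delta_k$ only has to hold for $k<r/2$, where your exponent computation is correct and gives $\Delta_k/\Exp X=O(n^{(r-1)(\eps-1/2)})=o(1)$. With that caveat, your derivation of $\Delta\le\frac12\Exp X$ and the final bound $\exp(-n^{(r-2)\eps}/4)$ matches the paper's conclusion.
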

  \begin{claimproof}
    Let~$\cE=\bigcup\cP$ be the family of $r$-sets exposed in
    step~\ref{step:exposepaths} in this iteration of the foreach-loop.  For
    each~$P\in\cP$ let $I_P$ be the indicator variable for the event that
    the path~$P$ appears, which occurs with probability $\tilde
    p=p^{r-1}$. Then $X=\sum_{P\in\cP} I_P$ is the random variable counting
    the number of $L-L'$-paths appearing in this iteration. We would like
    to use Janson's inequality~\eqref{eq:Janson} to show that $X>0$ with
    high probability, in which case Algorithm~\ref{alg:connect} does not
    halt in step~\ref{step:failure}.

    To this end we first bound $\Exp X$, for which we need to estimate
    $|\cP|$. Firstly, since $\cF(\tpl{u}^{(i)})$ and $\cF(\tpl{v}^{(i)})$
    are disjoint fans, no vertex is in a leaf both of
    $\cF(\tpl{u}^{(i)})$ and of $\cF(\tpl{v}^{(i)})$, and in particular
$L(\tpl{u}^{(i)})$ and $L(\tpl{v}^{(i)})$ are disjoint. Now let
    $\tpl{\tilde v}$ be any $(r-1)$-tuple in $L(\tpl{v}^{(i)})$. By
    construction $\tpl{\tilde v}$ is not in $\tilde{D}_i^{(r-1)}$ (see
    step~\ref{step:exposeedges} and the definition of $\Bad(\tpl{a})$).
    By~\eqref{eq:connectr:dangerir-1} the path $(\tpl{\tilde u},\tpl{\tilde
      v})$ therefore is blocked by~$H$ for at most
    $\xi'\big|L(\tpl{u}^{(i)})\big|$ tuples $\tpl{\tilde{u}}\in L(\tpl{u}^{(i)})$.
    Thus we have
    \begin{equation*}
      |\cP| \ge
      |L(\tpl{v}^{(i)})\big|\cdot
      (1-\xi')|L(\tpl{u}^{(i)})\big|
      \geByRef{eq:connect:L} (1-\xi')n^{r-1-\eps}\,,
   \end{equation*} 
   which gives
   \begin{equation}\label{eq:connect:E}
     \Exp X=|\cP|\tilde p \ge (1-\xi')n^{r-1-\eps}n^{(\eps-1)(r-1)}=
     (1-\xi')n^{(r-2)\eps}\,.
   \end{equation}
   
   Next we would like to estimate $\Exp(I_PI_{P'})$ for two distinct paths
   $P=(\tpl{\tilde u},\tpl{\tilde v})$ and $P'=(\tpl{\tilde u}',\tpl{\tilde
     v}')$ which share at least one edge.  If $P$ and $P'$ are distinct
paths sharing at least one edge, then in
   particular, either $\tpl{\tilde u}$ and $\tpl{\tilde u}'$ have the same
   end $r/2$-tuple, or $\tpl{\tilde v}$ and $\tpl{\tilde v}'$ have the same
   start $r/2$-tuple. Without loss of generality assume the former and
   suppose that $\tpl{\tilde v}$ and $\tpl{\tilde v}'$ match in the start
   $j$-tuple, but not in the $(j+1)$st vertex. Clearly $1\le j$,
   and since $\cF(\tpl{u}^{(i)})$ and $\cF(\tpl{v}^{(i)})$ are fans we have
   $\tpl{\tilde u}=\tpl{\tilde u'}$ and $j<r/2$. Hence~$P$ and~$P'$ share
   precisely an interval of length $r-1+j$, and thus~$j$ edges. Therefore
   $\Exp(I_PI_{P'})\le p^{2r-2-j}$.

   In addition, the discussion above shows that for a fixed path
   $P=(\tpl{\tilde u},\tpl{\tilde v})$, the number $N_{P,j}$ of
   paths~$P'=(\tpl{\tilde u}',\tpl{\tilde v}')$ such that $P$ and $P'$
   share $j$ edges, is at most the number of choices of a leaf $\tpl{\tilde
     v}'\in L(\tpl{v}^{(i)})$ such that $\tpl{\tilde v}'$ only has the end
   $(r-1-j)$-tuple $\tpl{v}$ different from $\tpl{\tilde v}$, plus the
   number of choices of a leaf $\tpl{\tilde u}'\in L(\tpl{u}^{(i)})$ such
   that $\tpl{\tilde u}'$ only has the start $(r-1-j)$-tuple $\tpl{u}$
   different from $\tpl{\tilde u}$.
   By Claim~\ref{cl:connect:F}\ref{item:connect:Ui} the start $j$-tuple of
   $\tpl{\tilde v}$ and the end $j$-tuple of $\tpl{\tilde u}$ have multiplicity
   in~$U_i$ at most $n^{(r-1)/2-j(1-\eps)}+1$. By
   step~\ref{step:Ui} this implies that there are at most
   $n^{(r-1)/2-j(1-\eps)}$ choices for $\tpl{u}$ and for $\tpl{v}$, and
   hence $N_{P,j}\le 2 n^{(r-1)/2-j(1-\eps)}$.

   With this we are ready to estimate
   \begin{equation*}
     \Delta=\sum_{P\neq P',P\cap P'=\emptyset}\Exp(I_PI_{P'})
     =\sum_P\sum_{1\le j<r/2}\Big(\sum_{|P'\cap P|=j} \Exp(I_PI_{P'})\Big),
   \end{equation*}
   where $P,P'\in\cP$.  We have
   \begin{equation*}
     \begin{split}
     \Delta&\le\sum_P\sum_{1\le j<r/2} N_{P,j} \cdot p^{2r-2-j} \\
     &\le|L(\tpl{u}^{(i)})||L(\tpl{v}^{(i)})| \sum_{1\le j<r/2} 2
     n^{(r-1)/2-j(1-\eps)} p^{2r-2-j},
     \end{split}
   \end{equation*}
   which, by~\eqref{eq:connect:L}, is at most
   \begin{multline*}
     n^{(r-1)-\eps} \sum_{1\le j<r/2} 2
     n^{(r-1)/2-j(1-\eps)} n^{(\eps-1)(2r-2-j)},\\
     \leq n^{(r-1)-\eps} \cdot r \cdot
     n^{-\frac32(r-1)+2\eps(r-1)}<1.
   \end{multline*}
   Hence, inequalities~\eqref{eq:Janson} and~\eqref{eq:connect:E}
   imply that $\Prob(X=0)\le \exp(\Delta-\Exp X)\le \exp(-n^{(r-2)\eps}/4)$,
   and thus Algorithm~\ref{alg:connect} fails with at most this probability
   in this visit to step~\ref{step:failure}
 \end{claimproof}

  Since Algorithm~\ref{alg:connect} visits step~\ref{step:failure} at most
  $k\le n$ times, we can use Claim~\ref{cl:connect:failure} and a union bound to
  infer that~\ref{item:connect:A3} holds with probability at least
  $1-n\cdot\exp\big(-n^{(r-2)\eps}/4\big)\ge 1-\frac12\exp\big(-\delta
  n^\eps/(100 r)\big)$.
  Similarly, step~\ref{step:CheckC} of Algorithm~\ref{alg:fan} is called at
  most once per leaf in any of the at most $2k$ constructed fans, which is
  at most $2k\cdot 2n^{(r-1)/2-\eps/2}\le n^r$ times
  by~\eqref{eq:connect:L}. It follows from Claim~\ref{cl:connect:checkC}
  that~\ref{item:connect:A2} holds with probability at least
  $1-n^r \cdot 2\exp\big(-\delta n^\eps/(96r)\big)\ge 1-
  \frac12\exp(-\delta n^\eps/(100r)\big)$.

  Summarising, we showed that Algorithm~\ref{alg:connect} constructs
  the~$k$ desired tight paths of length at most~$\ell$ with probability at
  least $1-\exp(-\delta n^\eps/(100r)\big)$.
\end{proof}

\section{Proof of the reservoir lemma}
\label{sec:reserve}

In this section we prove Lemma~\ref{lem:reserve}.

\begin{proof}[Proof of Lemma~\ref{lem:reserve}]
  Choose $\ell:=\big\lceil 1/\big(2(r-1)\eps\big) \big\rceil+2$.  Our
  strategy will be as follows. We will start by defining an auxiliary
  $r$-uniform hypergraph~$\Drl$ with $2(r-1)(2\ell-1)+1$ vertices and as
  many edges, which implies
  \begin{equation}\label{eq:do:D}
    d^{(1)}(\Drl)=1+\frac{1}{2(r-1)(2\ell-1)} \le 1 + \eps \,.
  \end{equation}
  After defining~$\Drl$ we shall construct a graph~$\cH^*$
  which satisfies~\ref{lem:reserve:2} and~\ref{lem:reserve:3} and is such that
  $\Drl\subset\cH^*$ and $\Drl$~has maximum $1$-density among all
  subhypergraphs of~$\cH^*$.

  \smallskip

  The vertex set of $\Drl$ is
  \begin{equation*}
    V(\Drl):=U\cup V\cup\bigcup_{i\in[\ell-1]} A_i \cup \bigcup_{i\in[\ell-2]}
    B_i\,.
  \end{equation*}
  where $U:=(u_1,\dots,u_{r-1},w^*,u_r,\dots,u_{2(r-1)})$,
  $V:=(v'_1,\dots,v'_{2(r-1)})$, $A_i:=(a^{(i)}_1,\dots a^{(i)}_{2(r-1)})$ for
  $i\in[\ell-1]$, and $B_i:=(b^{(i)}_1,\dots b^{(i)}_{2(r-1)})$ for
  $i\in[\ell-2]$ are ordered sets of vertices.  The edge set of $\Drl$ contains
  exactly the edges of the tight paths determined by~$U$, by~$V$, by~$A_i$ for each
  $i\in[\ell-1]$, by~$B_i$ for each $i\in[\ell-2]$, as well as
  by the following vertex sequences:
  \begin{align*}
    & \tilde U_A:= (u_{1},\dots,u_{r-1},a^{(1)}_{r-1},\dots,a^{(1)}_{1})  \,, \\
    & \tilde V_A:=(a^{(\ell-1)}_{2(r-1)},\dots,a^{(\ell-1)}_{r},v'_{r},\dots,v'_{2(r-1)})  \,, \\
    & \tilde U_B:=(u_{2(r-1)},\dots,u_{r},b^{(1)}_{r-1},\dots,b^{(1)}_{1})  \,, \\
    & \tilde V_B:=(b^{(\ell-2)}_{2(r-1)},\dots,b^{(\ell-2)}_{r},v'_{r-1},\dots,v'_{1})  \,,
  \end{align*}
  and
  \begin{align*}
    & \tilde A_{i,i+1}:=(a^{(i)}_{2(r-1)},\dots,a^{(i)}_{r},a^{(i+1)}_{r-1},\dots,a^{(i+1)}_{1}) &&
    \quad\text{for all $i\in[\ell-2]$} \,,\\
    & \tilde B_{i,i+1}:=(b^{(i)}_{2(r-1)},\dots,b^{(i)}_{r},b^{(i+1)}_{r-1},\dots,b^{(i+1)}_{1}) &&
    \quad\text{for all $i\in[\ell-3]$} \,.
  \end{align*}
  It is not difficult to check that~$\Drl$ has exactly $2(r-1)(2\ell-1)+1$
  vertices and edges as claimed.

  \begin{figure}[t]
    \begin{center}
      \psfrag{U}{\scalebox{1.3}{$U$}}
      \psfrag{V}{\scalebox{1.3}{$V$}}
      \psfrag{A1}{\scalebox{1.3}{$A_1$}}
      \psfrag{A2}{\scalebox{1.3}{$A_2$}}
      \psfrag{B1}{\scalebox{1.3}{$B_1$}}
      \psfrag{w}{\scalebox{0.8}{$w^*$}}
      \includegraphics[scale=.85]{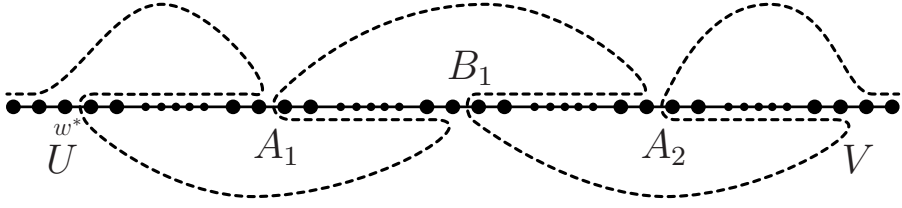}
      \caption{$\cH^*$ for $r=3$ and $\ell=3$. The vertices of~$\Drl$ are drawn
        bigger than the vertices newly inserted in~$\cH$. The continuous line indicates the tight
        Hamilton path in~$\cH^*$ from~\eqref{eq:reserve:path}, the dashed line the tight
        Hamilton path in $\cH^*-w^*$.
      }\label{fig:F}
    \end{center}
  \end{figure}

  In order to obtain~$\cH^*$ from~$\Drl$ we first let $v_i:=v'_{(r-1)+i}$ for each
  $i\in[r-1]$. Then we insert \[k:=3(r-1)^2(2\ell-1)\] new vertices `between' each
  of the following pairs of vertex sets in $\Drl$: $U$ and~$A_1$, $A_{\ell-1}$
  and~$V$, $A_i$ and~$B_i$ for each $i\in[\ell-2]$, $B_i$ and~$A_{i+1}$ for each
  $i\in[\ell-2]$. We let $I(X,Y)$ denote the ordered set of vertices inserted
  `between' the sets~$X$ and~$Y$ in this process (where we choose any ordering).
  In addition, we add to this graph the tight Hamilton path 
  \begin{multline}\label{eq:reserve:path}
    U, I(U,A_1), A_1, I(A_1,B_1), B_1, I(B_1,A_2), A_2,\dots \\
    \dots, B_{\ell-2}, I(B_{\ell-2},A_{\ell-1}),A_{\ell-1},I(A_{\ell-1},V),V
  \end{multline}
  running from $\tpl{u}$ to $\tpl{v}$ (which uses some edges already
  present in~$\Drl$). The resulting hypergraph is~$\cH^*$ (see
  also Figure~\ref{fig:F}).  

  By construction $v(\cH^*)=2(r-1)(2\ell-1)+1+2k(\ell-1)$
  which by definition of~$k$ is smaller than $16(r-1)^2\ell^2\le
  16\eps^{-2}$, and $e(\cH^*)=2(r-1)(2\ell-1)+1+2(k+r-1)(\ell-1)$. Since
  $\ell>1$ this implies
  \begin{equation}\label{eq:do:F}
  \begin{split}
    d^{(1)}(\cH^*) &= 1+\frac{2(r-1)(\ell-1)+1}{2(r-1)(2\ell-1)+2k(\ell-1)} \\
      & < 1+\frac{2(r-1)(\ell-1)+1}{2k(\ell-1)}
      \le 1+\frac{1}{2(r-1)(2\ell-1)} \\
      & \eqByRef{eq:do:D} d^{(1)}(\Drl) \,.
  \end{split}
  \end{equation}
  By~\eqref{eq:reserve:path} the hypergraph~$\cH^*$ satisfies~\ref{lem:reserve:2}.
  We define $\tilde I(Y,X)$ to be the reversal of $I(X,Y)$.
  It can be checked that~$\cH^*$ also contains the tight path
  \begin{multline*}
    \tilde U_A, \tilde I(A_1, U),
    \tilde U_B,\tilde I(B_1,A_1),
    \tilde A_{1,2},\tilde I(A_2,B_1),
    \tilde B_{1,2},\tilde I(B_2,A_2),
    \tilde A_{2,3}, \dots \\
    \dots, 
    \tilde A_{\ell-2,\ell-1}, \tilde I(A_{\ell-1},B_{\ell-2}),
    \tilde V_B, \tilde I(V,A_{\ell-1}),
    \tilde V_A \,.
  \end{multline*}
  This is a tight path from $\tpl{u}$ to $\tpl{v}$ running through all
  vertices of~$\cH^*$ but~$w^*$, and so~$\cH^*$ also satisfies~\ref{lem:reserve:3}. It
  remains to show that~$\Drl$ has maximal $1$-density among all subgraphs
  of~$\cH^*$.
  
  Suppose that $\cH$ is a subgraph of $\cH^*$ with maximal $1$-density. It
  follows that $\cH$ is an induced subgraph of $\cH^*$, and that we have
  $d^{(1)}(\cH)\ge d^{(1)}(\Drl)>1$. It follows that $\cH$ cannot contain
  any vertex of degree one (otherwise we could delete it and increase the
  $1$-density). In particular, this means that if $I(X,Y)$ is any of the
  sets of $k$ vertices which form a tight path in $\cH^*$ and which are not
  present in $\Drl$, then either every vertex of $I(X,Y)$ is in $\cH$, or
  none are. Similarly, by the definition of $k$ we have $k\cdot
  d^{(1)}(\Drl)>k+(r-1)$ and so $\cH$ cannot contain any $k$ vertices
  meeting only $k+r-1$ edges. Accordingly $\cH$ cannot contain $I(X,Y)$. It
  follows that $\cH$ must be a subgraph of $\Drl$.
  
  It is straightforward to check that if any of the vertices
  \begin{equation*}
  \begin{split}
    S:=
    \{u_2,\ldots,u_{2(r-1)-1},&w^*,v_2,\ldots,v_{2(r-1)-1},\\
    &a_2^{(i)},\ldots,
    a_{2(r-1)-1}^{(i)},b_2^{(i)},\ldots,b_{2(r-1)-1}^{(i)}\}
  \end{split}
  \end{equation*}
  of $\Drl$ is removed from $\Drl$, then we obtain a graph which can be
decomposed by successively removing vertices of degree at most one (i.e. it is
$1$-degenerate) and which therefore has $1$-density at most $1$. It follows that
$S\subset V(\cH)$. Now let $x$ be any
  vertex of $\Drl$ which is not in $\cH$. Since $x\not\in S$ we have
  $\deg_{\Drl}(x)=2$, and both edges containing $x$ have all their remaining
vertices in $S$. Thus we have
  \[d^{(1)}\big(\Drl\big[V(\cH)\cup\{x\}\big]\big)\ge\min\big(d^{(1)}(\cH),2\big)\]
  and since $d^{(1)}(\Drl)<2$, we conclude that $d^{(1)}(\cH)\le d^{(1)}(\Drl)$
  as required.
\end{proof}

\section{Concluding remarks}
\label{sec:conclude}

\paragraph{\bf Graphs}
We remark that our approach does not work (as such) in the case $r=2$, even for
the sub-optimal edge probability $n^{\eps-1}$. For this case, in the proof
of the connection lemma, Lemma~\ref{lem:connect}, when growing a fan we
would have to reveal in each iteration of the foreach-loop in
Algorithm~\ref{alg:fan} more than $n^{1-\eps}$ edges at a
vertex~$a$. In the construction of one fan 
we would have to repeat this operation at least $n^{(1/2)-2\eps}$ times:
only then we could hope for the fan to have $n^{(1/2)-\eps}$ leaves, which
we need in order to get a connection between two such fans at least in
expectation.  But then we would have revealed at least $n^{1-\eps}\cdot
n^{(1/2)-2\eps}=n^{(3/2)-3\eps}$ edges to obtain a single
connection. Hence, we cannot obtain a linear number of connections in this way, as
required by our strategy.

\smallskip

\paragraph{\bf Vertex disjoint cycles}
It is easy to modify our approach to show the following theorem.

\begin{theorem}\label{thm:factor} For every integer $r\ge 3$ and for every
  $\eps,\delta>0$ the following holds. Suppose that $n_1,\dots,n_\ell$ are
  integers, each at least $2r/\eps$, whose sum is at most $n$, and
  $n_1\ge\delta n$. Then for any $n^{-1+\eps}< p=p(n)\le 1$, the random
  $r$-uniform hypergraph $\cG^{(r)}(n,p)$ contains a collection of vertex
  disjoint tight cycles of lengths $n_1,\ldots,n_\ell$ with probability
  tending to one as $n$ tends to infinity.
\end{theorem}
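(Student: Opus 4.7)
The plan is to mirror the two-round strategy of Theorem~\ref{thm:main}, sorting the cycles by size and handling each group by a different method. Fix a threshold $n_0:=n^{1-\eps/5}$ and call $n_i$ \emph{large} if $n_i\ge n_0$ and \emph{small} otherwise. Since $n_1\ge\delta n$ at least one cycle is large, and there are at most $n/n_0=n^{\eps/5}$ large cycles though possibly many small ones. Using the colouring trick from the preprocessing step of Theorem~\ref{thm:main}, I split $\Gr(n,p)$ into two independent copies $\cG'$ and $\cG''$, each distributed as $\Gr(n,p')$ with $p'\ge n^{-1+\eps/3}$, say.

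I would handle the small cycles in $\cG'$ one at a time, maintaining a set $U$ of used vertices. For each small $n_i$, we have $m^{(1)}(\Crl[n_i])=n_i/(n_i-1)\le 1+\eps/(2r-\eps)$, so $p'$ sits well above $Cn^{-1/m^{(1)}(\Crl[n_i])}$ for any fixed constant~$C$. Applying Theorem~\ref{thm:disjcopy} to $\cG'[[n]\setminus U]$, which is distributed as $\Gr(|[n]\setminus U|,p')$ with $|[n]\setminus U|\ge\sum_{\text{large}}n_i\ge n_1\ge\delta n$ throughout, yields many vertex-disjoint copies of $\Crl[n_i]$, from which I pick one to serve as the $i$th tight cycle and add its vertex set to~$U$.

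For the large cycles I would work in $\cG''$. Once all small cycles are placed, the set $V$ of remaining vertices has size $\sum_{\text{large}}n_i$, and I partition $V$ arbitrarily into sets $W_i$ of size $n_i$, one per large index~$i$. The induced random hypergraph $\cG''[W_i]$ is distributed as $\Gr(n_i,p')$, and since $n_i\ge n_0=n^{1-\eps/5}$ and $p'\ge n^{-1+\eps/3}$, a short calculation gives $p'\ge n_i^{-1+\eps''}$ for some fixed $\eps''\in(0,1/(4r))$. Hence Theorem~\ref{thm:main} applies to each $\cG''[W_i]$ and a.a.s.\ produces a tight Hamilton cycle on $W_i$, which is the desired tight cycle of length exactly $n_i$. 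A union bound over the at most $n^{\eps/5}$ large cycles shows the whole procedure succeeds a.a.s.

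The main obstacle is to ensure that the a.a.s.\ conclusion for small cycles holds uniformly over the full admissible range of lengths $n_i\in[2r/\eps,n_0)$ and simultaneously for all the sets $U$ that can arise during the iterative procedure. Since $m^{(1)}(\Crl[n_i])<1+\eps$ uniformly, the expected number of $\Crl[n_i]$-copies in any linear-sized vertex subset of $\cG'$ is of order $n^{\Omega(\eps n_i)}$, and a direct application of Janson's inequality as in~\eqref{eq:Janson} should yield super-polynomially small failure probability per pair $(n_i,U)$, allowing a union bound over all admissible choices.
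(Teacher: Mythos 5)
Your proposal takes a genuinely different route from the paper's. The paper's sketch reuses the machinery from Theorem~\ref{thm:main}: it builds a small reservoir path, then greedily grows vertex-disjoint tight paths of lengths a little shorter than $n_2,\dots,n_\ell$, closes each of these into a cycle of exactly the prescribed length via a (slightly strengthened) connection lemma, and finally uses the reservoir to close the last cycle of length $n_1$. Your proposal instead classifies each prescribed length as large or small, embeds large cycles by invoking Theorem~\ref{thm:main} on disjoint vertex classes, and tries to plant the small cycles directly.

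The large-cycle part is essentially sound (modulo checking that the induced exponent $\eps''$ lands strictly below $1/(4r)$ and that the a.a.s.\ bound in Theorem~\ref{thm:main} is strong enough to survive a union bound over the up to $n^{\eps/5}$ classes $W_i$; the theorem statement as given only asserts ``a.a.s.'', not a quantitative failure probability). But the small-cycle part has a genuine gap in the middle of the size range. Theorem~\ref{thm:disjcopy} is a statement about a \emph{fixed} hypergraph~$\cH$: the constants $\nu$ and $C$ depend on $\cH$, so it only applies to cycles $\Crl[n_i]$ of bounded length. For $n_i = \omega(1)$ the theorem simply does not apply, which you acknowledge — but switching to Janson does not repair the argument, because you also need to find the small cycles in \emph{pairwise disjoint} vertex sets. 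If you allocate a block $Z_i$ of size $N_i$ to the $i$th small cycle, the expected number of copies of $\Crl[n_i]$ in $\Gr(N_i,p')$ is, up to polynomial corrections, $(N_i p')^{\Theta(n_i)}$. This tends to infinity only if $N_i p' \gg 1$, i.e.\ $N_i \gg 1/p' \approx n^{1-\eps/3}$, regardless of how small $n_i$ is. Since $\sum_i N_i \le n$, at most $n^{\eps/3}$ such blocks fit. Yet the input may legitimately prescribe far more cycles in the problematic range — for instance $n_2=\dots=n_\ell=\log n$ with $\ell \approx \delta n/\log n$ of them, each of length $\omega(1)$ but far below $n^{1-\eps/3}$, so falling outside the reach of both Theorem~\ref{thm:disjcopy} and Theorem~\ref{thm:main}, and requiring more disjoint blocks than exist.

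Your alternative suggestion of searching for each small cycle sequentially in all currently unused vertices (so each search happens in a linear-size set) avoids the allocation problem but introduces a conditioning problem: after finding the first few cycles, the remainder of $\cG'$ on the unused vertices is no longer distributed as a fresh $\Gr(\cdot,p')$, and with up to $\Theta(n)$ small cycles you cannot afford a fresh exposure round per cycle. The paper sidesteps all of this precisely because its greedy-path-plus-connection construction works vertex by vertex, so each step uses $\Theta(n)$ untouched candidate vertices and the deferred-decision argument applies uniformly to all cycle lengths simultaneously. One additional small slip worth fixing: you assert that after placing the small cycles the remaining set $V$ has size $\sum_{\mathrm{large}} n_i$, but since the theorem only requires $\sum_i n_i \le n$ (not $=n$) there may be surplus vertices; you should carve $W_i\subset V$ with $|W_i|=n_i$ rather than partitioning all of $V$.
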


A proof sketch is as follows. We refer to the steps used in the proof of
Theorem~\ref{thm:main}. 

First, we would run step~$1$ as before, except that we would find reservoir
graphs covering only at most $\eps\delta n/(8r)$ vertices. Step $2$ remains
unchanged. We would then in an extra step (requiring an extra round of
probability) to create greedily a collection of vertex disjoint tight paths
of lengths slightly shorter than $n_2,\ldots,n_\ell$, and another extra
step using Lemma~\ref{lem:connect} to connect these paths into tight cycles
of lengths $n_2,\ldots,n_\ell$. Here we require that the connecting paths
always have a precisely specified length.  As written,
Lemma~\ref{lem:connect} does not guarantee this (the output paths have
lengths differing by at most two, since the paths in each fan can differ in
length by one) but it is easy to modify the lemma to obtain this (we would
simply extend each of the shorter fan paths by one vertex while avoiding
dangerous sets). The remainder of the proof can remain almost unchanged. We
extend the reservoir path greedily to cover most of the remaining
vertices. Then we apply Lemma~\ref{lem:connect} twice to cover all the
leftover vertices and complete a cycle. Then this cycle has length $n_1$ as
desired. (The only difference is that some of our constants will need to be
adapted slightly.)

Again, for fixed $r$, $\eps$ and $\delta$ we obtain a randomised polynomial
time algorithm from this proof. Note that the condition that the cycles
should not be too short cannot be completely removed: in order to have
linearly many cycles of length $g$ with high probability, we require that
linearly many such cycles exist in expectation. This expectation is of the
order $n^gp^g$, which is in $o(n)$ if $p=o\big(n^{-(g-1)/g}\big)$.

\smallskip

\paragraph{\bf Derandomisation}
Our approach to Theorem~\ref{thm:main} yields a randomised
algorithm. However we only actually use the power of randomness in order to
preprocess our input hypergraph and `simulate' multi-round exposure. This
motivates the following question.

\begin{question}
  For a constructive proof which uses multi-round exposure, how can one
  obtain a \emph{deterministic} algorithm?
\end{question}

Replacing the randomised preprocessing step with a deterministic splitting
of the edges of the complete $r$-uniform hypergraph into disjoint dense
quasirandom subgraphs might be a promising strategy here.

Multi-Round exposure is a very common technique in probabilistic
combinatorics. Hence this question might be of interest for other problems
as well.

\smallskip

\paragraph{\bf Resilience.}
A very active recent development in the theory of random graphs is the
concept of resilience: under which conditions can one transfer a classical
extremal theorem to the random graph setting? Lee and Sudakov~\cite{LS},
improving on previous work of Sudakov and Vu~\cite{SV}, showed that Dirac's
theorem can be transferred to random graphs almost as sparse as at the
threshold for hamiltonicity. More precisely, they proved that for each
$\eps>0$, if $p\ge C\log n/n$ for some constant $C=C(\eps)$, then almost
surely the random graph $G=G(n,p)$ has the following property. Every
spanning subgraph of $G$ which has minimum degree $(\tfrac{1}{2}+\eps)pn$,
contains a Hamilton cycle.

It would be interesting to prove a corresponding result for tight Hamilton cycles
in subgraphs of random hypergraphs. It is unlikely that the Second Moment
Method will provide help for this. Our methods, however, might be robust
enough to provide some assistance.

\section{Acknowledgements}

We would like to thank Klas Markstr\"om for suggesting
 Theorem~\ref{thm:factor}.


\bibliographystyle{amsplain_yk} 
\bibliography{TightCycle}

\end{document}